\documentclass[11pt]{amsart}



\usepackage{tikz}

\usetikzlibrary{knots}
\usetikzlibrary{decorations.markings}

\usepackage{graphicx} 
%
%


\usepackage{t1enc}
\usepackage{amsthm,amssymb}
\usepackage{latexsym}
\usepackage{amsmath} 
\usepackage{graphics}
\usepackage{epsfig,multicol}
\usepackage{amsfonts}
\usepackage[latin1]{inputenc}
\usepackage[english]{babel}
\usepackage{ mathrsfs }

\paperwidth=200mm
\textwidth=140mm
\textheight=200mm
\topmargin=20mm 
\hoffset=-10mm
\textheight=200mm

\newtheorem{theorem}{Theorem}[section]
\newtheorem*{theorem*}{Theorem}
\newtheorem*{proposition*}{Proposition}
\newtheorem{proposition}{Proposition}[section]
\newtheorem{lemma}{Lemma}[section]
\newtheorem*{lemma*}{Lemma}

\newtheorem{definition}{Definition}[section]

\newtheorem{corollary}{Corollary}[section]
\newtheorem{remark}{Remark}[section]


\def\hpic #1 #2 {\mbox{$\begin{array}[c]{l} \epsfig{file=#1,height=#2}
\end{array}$}}
 
\def\vpic #1 #2 {\mbox{$\begin{array}[c]{l} \epsfig{file=#1,width=#2}
\end{array}$}}

\newcommand  {\rmn}\romannumeral

 
\newcommand{\IC}[0]{\mathbb{C}} 

 \newcommand{\IN}[0]{\mathbb{N}}

 \newcommand{\IT}[0]{\mathbb{T}}

 \newcommand{\IZ}[0]{\mathbb{Z}}


\newcommand{\CA}[0]{\mathcal{A}} \newcommand{\CB}[0]{\mathcal{B}}
\newcommand{\CD}[0]{\mathcal{D}}
\newcommand{\CC}[0]{\mathcal{C}} \renewcommand{\CD}[0]{\mathcal{D}}
 \newcommand{\CF}[0]{\mathcal{F}}

\newcommand{\CO}[0]{\mathcal{O}} \newcommand{\CP}[0]{\mathcal{P}}
\newcommand{\CQ}[0]{\mathcal{Q}}

\begin{document}
\title[On the entropy and index of the winding endomorphisms of $\CQ_p$]{On the entropy and index of the winding endomorphisms of p-adic ring C$^*$-algebras}
\author{Valeriano Aiello} 
\address{Valeriano Aiello,
Mathematisches Institut, Universit\"at Bern, Alpeneggstrasse 22, 3012 Bern, Switzerland
}\email{valerianoaiello@gmail.com}
\author{Stefano Rossi} 
\address{Stefano Rossi,
Dipartimento di Matematica, Universit\`a degli studi Aldo Moro di Bari, Via E. Orabona 4, 70125 Bari, Italy
}\email{stefano.rossi@uniba.it}

\begin{abstract}

For $p\geq 2$, the $p$-adic ring $C^*$-algebra $\CQ_p$ is the universal $C^*$-algebra generated by a unitary $U$ and an isometry $S_p$ such that 
$S_pU=U^pS_p$ and $\sum_{l=0}^{p-1}U^lS_pS_p^*U^{-l}=1$. For any $k$ coprime with $p$  we define an endomorphism
$\chi_k\in{\rm End}(\CQ_p)$ by setting $\chi_k(U):=U^k$ and $\chi_k(S_p):=S_p$.
We then compute the entropy of $\chi_k$, which turns out to be $\log |k|$. Finally, for selected values of $k$ we also compute
the Watatani index of $\chi_k$ showing that the entropy is the natural logarithm of the index.

\end{abstract}

\maketitle

\section{Introduction}

First introduced by Adler, Konheim, and McAndrew \cite{AKM}, the topological entropy of a continuous map
on a compact Hausdorff space soon  proved to be a useful numerical invariant (under topological conjugacy)
to tackle, for instance, dynamics that may be out of the reach of the celebrated Halmos-von Neumann theorem, which only
settles those with topological discrete spectrum. Two results worth mentioning are that the entropy of any homeomorphism of the circle is null and the entropy of a differentiable map on a Riemannian manifold is finite, the latter of which is also known as
Kushnirenko's theorem. After a few years, Dinaburg and Bowen gave a novel yet equivalent definition for maps on metric spaces, which is particularly 
suited to establishing connections with Kolmogorov's measure theoretic entropy.\\
It was not until the mid $1990$s, though, that Voiculescu \cite{Voi} extended the original definition to
endomorphisms, or more generally to completely positive maps, of nuclear $C^*$-algebras, thought of
as the natural non-commutative counterpart of compact Hausdorff spaces. However, the computations involved
to find the exact value of the entropy are often rather demanding, so much so that not as many examples of endomorphisms 
as one would expect are known whose entropy has been computed. Of course, part of
the difficulty also depends on the choice of the $C^*$-algebra. Now the Cuntz algebras $\CO_p$, $p\geq 2$, are a natural family of $C^*$-algebras to consider not least because of their many connections with several research fields such as
algebraic quantum field theory, index theory, and wavelets. 
The first example to be discussed was the so-called canonical shift of $\CO_p$. In \cite{Choda} Choda showed that its entropy is
given by $\log p$, which is quite a remarkable fact as this value is nothing but the entropy of a Bernoulli shift on the alphabet
$\{1, 2, \ldots, p\}$ and the restriction of the canonical shift to the diagonal subalgebra $\CD_p\subset\CO_p$ is just such a Bernoulli shift. Soon after this result was obtained by Boca and Golstein \cite{Boca} for
 shift-type endomorphisms on arbitrary Cuntz-Krieger algebras by using a different technique, and more recently by 
Skalski and Zacharias \cite{SZ2} for  higher rank graph $C^*$-algebras. 
In \cite{SZ} the last-mentioned authors provided an upper bound to the entropy
of a general class of endomorphisms of $\CO_p$ that leave the UHF subalgebra $\CF_p$ invariant 
and satisfy a "finite-range" condition. Furthermore, they found the exact value of the entropy for all such endomorphisms
of $\CO_2$ associated  with permutations of rank $2$. In this paper we aim to show that a suitable
adaptation of the techniques employed in the aforementioned paper  can be exploited to compute the entropy
of a countable class of endomorphisms acting on the so-called $p$-adic ring $C^*$-algebras $\CQ_p$.
These and their generalizations have been of late the focus of much research \cite{ACR, ACR2, ACR3, ACR4, ACR5, ACRS, AR1, ACR6} and are here considered because they contain the Cuntz algebras in a natural way. Indeed, as we will see in the next section, each $\CO_p$ is contained in $\CQ_p$. Moreover, the commutative $C^*$-algebra of continuous functions
on the one-dimensional torus $\IT$ appears as a maximal abelian subalgebra of each $\CQ_p$. Now the endomorphisms
dealt with in our paper preserve this MASA, on which they simply act as $\IT\ni z\mapsto z^k\in\IT$, for some integer $k$.
For this reason we will refer to them as the \emph{winding endomorphisms}. Quite interestingly, their entropy is completely
determined by $k$. More precisely, the main result of the present paper is that their non-commutative entropy
is $\log |k|$, which is exactly the classical entropy of the continuous map $\Phi_k(z)=z^k$, $z\in\IT$, on the circle.
This is much in the same spirit as Choda's result on the entropy of the canonical shift we recalled above.\\
Finally, in the last section we  attack the problem of computing the Watatani index, too, of the winding endomorphisms 
so as to spot possible relations with the entropy, very much in line with what done in \cite{CS}, where the
quadratic permutation endomorphisms of the Cuntz algebra $\CO_2$ were studied.
The technique we employ can be applied only to values of $k$ of the form
$\pm(p-1)^i$, $i\in\IN$, and the Watatani index of the corresponding endomorphism turns out to be
exactly $|k|=(p-1)^i$. Nevertheless the index of the restriction of the winding endomorphisms to a remarkable
subalgebra of $\CQ_p$, the so-called gauge invariant subalgebra $\CQ_p^\IT$, which is isomorphic with
the Bunce-Deddens algebra of type $p^\infty$,
can be computed for all values of $k$ and, again, is given by $|k|$.
In particular, in all cases where the index can be computed the entropy is the natural logarithm of
the index.

\section{Preliminaries and notation}
Let $p$ be a natural number greater than or equal to $2$.
The $p$-adic ring C$^*$-algebra $\CQ_p$ is the universal $C^*$-algebra generated by a unitary $U$ and an isometry $S_p$ such that
$$
U^pS_p=S_pU \qquad \text{and} \qquad \sum_{l=0}^{p-1}U^lS_pS_p^*U^{-l}=1
$$
see also \cite{LarsenLi} for
$\CQ_2$,  and \cite{ACRS} for the general case. Note that 
$US_p^*=S_p^*U^p$, $U^*S_p^*=S_p^*U^{-p}$, and $\sum_{j=0}^{p^k}U^jS_p^k(S_p^*)^kU^{-j}=1$ for all $k\in\IN$. Furthermore, we also have $(S_p^*)^mU^{-i}U^jS_p^m=\delta_{i,j}$, if $0\leq i, j\leq p^m-1$.
All of these equalities can also be checked by means of the so-called canonical representation $\pi: \CQ_p\to \CB(\ell^2(\IZ))$ defined by 
$\pi(S_p)e_k:=e_{pk}$ and $\pi(U)e_k:=e_{k+1}$ for all $k\in\IZ$, where $\{e_k: k\in\IZ\}$ is the canonical basis of $\ell^2(\IZ)$, that is
$e_k(l):=\delta_{k,l}$ for any $k, l\in\IZ$. The canonical representation of a $p$-adic ring C$^*$-algebra
is irreducible, see \cite[Proposition 2.3]{ACRS}, where
the result is proved  for a broad class of
$C^*$-algebras, including all $p$-adic $C^*$-algebras, for which a canonical representation is always defined.
As is known, the Cuntz algebra $\CO_p$ is the universal
$C^*$-algebra generated by $p$ isometries $T_j$, $j=0, 1, \ldots, p-1$, such that $\sum_{j=0}^{p-1} T_j T_j^*=1$, \cite{Cuntz1}. We recall that $\CO_p$ injects into $\CQ_p$ through the $^*$-homomorphism that sends $T_j$ to $U^jS_p$ for $j=0,\ldots, p-1$. Henceforth, we will always think of $\CO_p$ as a subalgebra of $\CQ_p$.\\
The $p$-adic ring $C^*$-algebra is acted upon by $\IT$ in a natural way through the so-called
gauge automorphisms $\{\alpha_z: z\in\IT\}$. These are defined as $\alpha_z(S_p):=zS_p$ and
$\alpha_z(U):=U$. We denote by $\CQ_p^\IT\subset\CQ_p$ the subalgebra fixed by the gauge action of $\IT$, {\it i.e.}
$\CQ_p^\IT:=\{x\in\CQ_p: \alpha_z(x)=x,\,\,\textrm{for any}\,z\in\IT\}$. By definition, it is easy to check that $\CQ_p$ is
the norm closure of the linear span of monomials of the type $U^iS_p^h(S_p^*)^hU^j$, $h\in\IN$ and 
$i, j\in\IZ$.
Moreover, $\CQ_p^\IT$ is known to be isomorphic with the Bunce-Deddens algebra of type $p^\infty$, see
\cite[Remark 2.8]{BOSS}.
Another notable subalgebra of $\CQ_p$, which will play a key role in Section \ref{secindex}, is the so-called diagonal subalgebra,
$\CD_p$, which is the abelian $C^*$-algebra generated by all projections of the form $U^iS_p^m(S_p^*)^mU^{-i}$.
It turns out that $\CD_p$ is linearly generated by the above projections. Furthermore, $\CD_p$ is known to be maximal abelian
\cite{ACRS}. The Gelfand spectrum of $\CD_p$ can be seen to be homeomorphic with the Cantor set $K$, and the adjoint action
of $U$ restricts to $\CD_p$ as the $p$-adic odometer, which throughout this paper we denote by $T$.
The endomorphisms of $\CQ_p$ we will be focused on are those that fix $S_p$ while mapping $U$ to a power of it, say $U^k$. Set $\tilde U:=U^k$ and $\tilde S_p=S_p$. For such an endomorphism to exist, by universality it is necessary and
sufficient that $\tilde U$ and $\tilde S_p$ continue to satisfy the defining relations.
Now the relation $\tilde  U^p\tilde S_p=\tilde S_p\tilde U$ does not cause any restriction on $k$ since it is trivially satisfied.
 Because $U^p$ commutes with $S_pS_p^*$, the relation $\sum_{l=0}^{p-1}\tilde U^l\tilde S_p\tilde S_p^*\tilde U^{-l}=1$ does entail a restriction on the possible values of $k$, for we must have $\{[0],[k],[2k], \ldots, [(p-1)k]\}=\IZ_p$, where
$[l]$ denotes the congruence class of $l$ modulo $p$. 
This condition is fulfilled if and only if $k$ and $p$ are coprime, namely when their
greatest common divisor is $1$, in which case we write $(k, p)=1$. This is a consequence of a simple result, which we single out below
for the reader's convenience.
\begin{proposition}
Let $p>1$ be a fixed integer number. Then the group homomorphism $\Psi_k$ defined on
$(\IZ_p, +)$ by $\Psi_k ([n]):=[kn]$, for any $[n]\in\IZ_p$, is surjective if and only if $(k, p)=1$
\end{proposition}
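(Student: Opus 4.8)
The plan is to exploit the fact that $\IZ_p$ is a finite cyclic group, so that $\Psi_k$ being surjective is equivalent to a single congruence being solvable, and then to settle that congruence via B\'ezout's identity. First I would observe that the image of $\Psi_k$ is exactly the cyclic subgroup $\langle [k]\rangle$ generated by $[k]$: indeed $\Psi_k([n])=[kn]=n[k]$ shows that $\mathrm{ran}(\Psi_k)=\{n[k]:n\in\IZ\}=\langle[k]\rangle$. Consequently $\Psi_k$ is onto if and only if $[k]$ generates all of $\IZ_p$, and since $\IZ_p$ is generated by $[1]$, this happens precisely when $[1]\in\mathrm{ran}(\Psi_k)$, i.e.\ when there is some $[n]$ with $[kn]=[1]$.

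For the implication $(k,p)=1\Rightarrow\Psi_k$ surjective, I would invoke B\'ezout's identity to write $ak+bp=1$ for suitable integers $a,b$; reducing modulo $p$ gives $[ka]=[1]$, so that $[1]\in\mathrm{ran}(\Psi_k)$ and surjectivity follows from the preliminary remark. For the converse, surjectivity produces an integer $n$ with $kn\equiv 1\pmod p$, that is $kn-mp=1$ for some integer $m$; any common divisor of $k$ and $p$ must then divide $1$, whence $(k,p)=1$.

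Since both directions reduce to B\'ezout's identity, I do not expect a genuine obstacle here; the only point that deserves to be stated cleanly is that the image of a homomorphism out of a cyclic group is the cyclic subgroup generated by the image of a generator, which is what turns the surjectivity condition into the single requirement $[1]\in\mathrm{ran}(\Psi_k)$. If one prefers, finiteness of $\IZ_p$ additionally lets one rephrase the statement as the invertibility of $[k]$ in the ring $\IZ_p$, but the B\'ezout argument above is the most elementary and self-contained.
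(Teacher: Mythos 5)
Your proof is correct and complete. Note that the paper itself states this proposition without any written proof, treating it as elementary; your B\'ezout argument (image of $\Psi_k$ equals $\langle[k]\rangle$, surjectivity equivalent to solvability of $kn\equiv 1 \pmod p$, settled by B\'ezout in both directions) is exactly the standard reasoning the paper implicitly relies on, so there is nothing to reconcile between the two.
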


Thus, for any $k$ coprime with $p$ we can introduce the \emph{winding endomorphisms} $\chi_{k}:\CQ_p\to\CQ_p$ given by $\chi_{k}(U):=U^{k}$, $\chi_{k}(S_p):=S_p$. Except when $k= \pm 1$, these are all proper endomorphisms, cf. \cite[Proposition 6.1]{ACR} . When $p=2$, these endomorphisms were originally introduced in \cite[Section 6]{ACR} for $\CQ_2$. Note that $\chi_{k_1}\circ\chi_{k_2}=\chi_{k_1k_2}$, for any pair of integers
$k_1, k_2$ coprime with $p$.

\medskip
We now recall Voiculescu's definition of topological entropy, \cite[Section 4]{Voi}.
Since the $C^*$-algebras dealt with in this paper are all unital and nuclear, we will limit ourselves to  recalling the definition
for this class, although a more general definition can be given for arbitrary exact $C^*$-algebras, see \cite{B}.\\
Given a {\it nuclear} $C^*$-algebra  $\CA$ and an endomorphism  $\alpha: \CA\to \CA$, we denote by 
 CPA$(\CA)$ the set of triples $(\phi, \psi, \CB)$, where $\CB$ is a finite-dimensional $C^*$-algebra, $\phi: \CA\to \CB$, $\psi: \CB\to\CA$ are unital {\it completely positive} maps (u.c.p. for short).
For any $\epsilon>0$ and any finite subset $\omega\subset \CA$ (for brevity we write $\omega\in \CP f(\CA)$), 
we denote by CPA$(\CA,\omega, \epsilon)$ the set of triples $(\phi, \psi, \CB)\in {\rm CPA}(\CA)$ such that  $\| (\psi\circ\phi)(a)-a\|<\epsilon$  for all  $a\in\omega$.
As is known, the nuclearity of $\CA$ is equivalent  to the existence of a triple $(\phi,\psi,\CB)\in {\rm CPA(\CA, \omega,\epsilon)}$ for any $\omega\in\CP f(\CA)$ and $\epsilon>0$. For a thorough account of completely positive maps and
nuclear (also known as amenable) $C^*$-algebras, we refer the reader to \cite{HuaxinLin}.\\
The completely positive $\epsilon$-rank of an endomorphism $\alpha$ is then  defined by the following formula
$$
{\rm rcp}(\omega,\epsilon):=\inf\{{\rm rank}(\CB) \; | \; (\phi, \psi, \CB)\in {\rm CPA}(\CA,\omega, \epsilon)\}
$$
where rank$(\CB)$ denotes the dimension of a maximal abelian subalgebra of $\CB$.
If we set
\begin{align*}
{\rm ht}(\alpha, \omega;\epsilon) & :=\limsup_{n\to\infty} \frac{\log{\rm rcp}(\omega\cup\alpha(\omega)\cup\ldots\cup\alpha^{n-1}(\omega);\epsilon)}{n}\\
{\rm ht}(\alpha, \epsilon) & :=\sup_{\epsilon>0}{\rm ht}(\alpha, \omega;\epsilon)
\end{align*}
 the topological entropy of $\alpha$ is finally defined as
$$
{\rm ht}(\alpha):=\sup_{\omega\in\CP f(\CA)}{\rm ht}(\alpha, \omega)
$$
One way to obtain a lower bound for the topological entropy is to consider a commutative $C^*$-algebra $\CC$ of $\CA$ that is invariant under $\alpha$. 
Then, it holds 
$$
{\rm ht}(\alpha)\geq {\rm ht}(\alpha\upharpoonright_\CC)={\rm h}_{\rm top}(T)
$$ 
where
$T$ is the map induced by  $\alpha\upharpoonright_\CC$ on the level of the spectrum of $\CC$,  \cite{Voi}.
Sometimes a lower bound thus obtained is just the exact value of the entropy. 
However, in \cite{Adam} examples are given of automorphisms on non-commutative
$C^*$-algebras whose entropy is in fact bigger than the supremum of the the lower bounds provided by considering the restriction to
all classical subsystems.
Another fundamental tool is the so-called Kolmogorov-Sinai property which says that if $(\omega_i)_{i\in I}$ is a family of finite subsets of $\CA$ such that the linear span of $\cup_{i\in I, n\in\IN}\alpha^n(\omega_i)$ 
is dense in $\CA$, then 
$$
{\rm ht}(\alpha)=\sup_{\epsilon>0, i\in I}\limsup_{n\to\infty}\left( \frac{1}{n}\log{\rm rcp}(\alpha^n(\omega_i),\epsilon)	\right)
$$


\section{Main result}

 \begin{theorem}\label{entropy}
For any $k$ coprime with $p$, the entropy of the winding endomorphims is given by
$$
{\rm ht}(\chi_{k})= \log|k|\; .
$$
\end{theorem}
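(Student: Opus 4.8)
The plan is to prove the two inequalities ${\rm ht}(\chi_k)\ge\log|k|$ and ${\rm ht}(\chi_k)\le\log|k|$ separately, the lower bound being essentially immediate and the upper bound carrying all the real content.

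For the lower bound I would use that $C^*(U)\cong C(\IT)$ is a commutative subalgebra invariant under $\chi_k$, since $\chi_k(U)=U^k\in C^*(U)$, and that on its spectrum $\IT$ the endomorphism is dual to the circle map $\Phi_k(z)=z^k$. Voiculescu's monotonicity principle recalled in the preliminaries then gives
$$
{\rm ht}(\chi_k)\ \ge\ {\rm ht}(\chi_k\upharpoonright_{C^*(U)})\ =\ {\rm h}_{\rm top}(\Phi_k)\ =\ \log|k|,
$$
the last equality being the classical value of the topological entropy of a degree-$k$ self-map of the circle (and $=\log 1=0$ when $k=\pm1$).

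For the upper bound, note first that $\chi_k^n=\chi_{k^n}$, so by the Kolmogorov--Sinai property it suffices to control the growth of ${\rm rcp}(\chi_{k^n}(\omega);\epsilon)$ for a total family of finite sets $\omega$, which I take to be the spanning monomials $U^iS_p^h(S_p^*)^lU^j$ together with $S_p,S_p^*$. The structural observation I would establish is that $\chi_k$ preserves each circle algebra $A_M:=C^*\big(\{U^iS_p^M(S_p^*)^MU^{-j}\}_{0\le i,j<p^M},\,U^{p^M}\big)$: one checks that $U^{p^M}$ commutes with the matrix units $U^iS_p^M(S_p^*)^MU^{-j}$ and has full spectrum, so $A_M\cong C(\IT)\otimes M_{p^M}$ with center generated by $u:=U^{p^M}$, and that $\chi_k(U^iS_p^M(S_p^*)^MU^{-j})=U^{ki}S_p^M(S_p^*)^MU^{-kj}$ and $\chi_k(u)=u^k$ all lie in $A_M$ because $U$ itself lies in $A_M$. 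Since $\bigcup_M A_M$ is dense in $\CQ_p^\IT$ and $\chi_k$ fixes $S_p$, hence preserves the gauge grading, approximating $\chi_{k^n}(\omega)$ reduces --- up to a factor in the rank that is constant in $n$, coming from dressing by the finitely many fixed isometries $S_p^{\pm}$ needed to recover an element of prescribed gauge degree --- to approximating the degree-zero parts, all of which sit in a single $A_M$ once $M\ge h,l$.

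The heart of the matter is then to build, for fixed $M$ and $\epsilon$, a triple $(\phi_n,\psi_n,\CB_n)\in{\rm CPA}(\CQ_p,\chi_{k^n}(\omega),\epsilon)$ with ${\rm rank}(\CB_n)\lesssim|k|^n$. Here I would take for $\psi_n$ the \emph{genuine} matrix units $U^iS_p^M(S_p^*)^MU^{-j}$ of $A_M$ --- this is essential, since $\CQ_p$ is simple and its canonical representation contains no nonzero compacts, so naive finite-rank compressions cannot be pushed back into $\CQ_p$ --- combined with a partition of $\IT$ into $\sim|k|^n$ arcs adapted to $\Phi_k^{\circ n}=\Phi_{k^n}$. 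Concretely, writing $k^n=qp^M+r$ with $0\le r<p^M$, the central identification $U^{p^M}=u\leftrightarrow(z\mapsto z)$ turns $\chi_{k^n}(U)=U^{k^n}=U^r u^{q}$ into a matrix-valued function of winding number $q\asymp|k|^n$; approximating it within $\epsilon$ by a step function requires $O(|k|^n)$ arcs, while tensoring with the full $M_{p^M}$ contributes only the constant factor $p^{2M}$. This gives ${\rm rank}(\CB_n)\le C_{M,\epsilon}\,|k|^n$, whence $\limsup_n\frac1n\log{\rm rcp}(\chi_{k^n}(\omega);\epsilon)\le\log|k|$ and ${\rm ht}(\chi_k)\le\log|k|$; together with the lower bound this proves the theorem.

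The step I expect to be the main obstacle is precisely this construction: ensuring $\|\psi_n\phi_n(a)-a\|<\epsilon$ uniformly for $a\in\chi_{k^n}(\omega)$ while keeping ${\rm rank}(\CB_n)$ of exact exponential order $|k|^n$. The delicate points are (i) the truncation error of the (non-unitary) compression of the large shift $U^{k^n}$, which must be shown to concentrate on a boundary region of relative size $O(|k|^n/N)$ and thus be absorbed by choosing the number of arcs $N\asymp|k|^n/\epsilon$, and (ii) confirming that the finite matrix direction $M_{p^M}$, although mixed nontrivially by $\chi_k$, produces no exponential growth, so that the rate is governed solely by the expanding circle map $\Phi_k$, matching the lower bound.
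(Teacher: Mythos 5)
Your lower bound coincides with the paper's (Lemma~\ref{lowerbound}): restriction to the $\chi_k$-invariant MASA $C^*(U)$ plus the classical value $\log|k|$ for $z\mapsto z^k$. Your structural observations on the gauge-invariant part are also sound: the algebras $A_M$ are indeed $\chi_k$-invariant copies of $M_{p^M}(C(\IT))$ whose union is dense in $\CQ_p^\IT$, and for elements of gauge degree zero your ``fixed $M$, growing number of arcs'' discretization would produce CPA triples of rank of order $|k|^n$. That much is a genuinely different decomposition from the paper's, which instead amplifies by a growing matrix algebra: the isomorphism $\Psi_m:\CQ_p\to M_{p^m}(\IC)\otimes\CQ_p$ with $p^m\sim|k|^n$ turns the elements to be approximated into large \emph{scalar} matrices whose entries lie in a fixed finite-dimensional space of ``tails'', so the exponential growth is carried by the matrix size rather than by the number of arcs.

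The genuine gap is the step you dispose of in one clause: the spanning monomials $U^iS_p^m(S_p^*)^nU^j$ with $m\neq n$ lie in no $A_M$, and their treatment ``up to a factor in the rank that is constant in $n$, coming from dressing by the finitely many fixed isometries'' is asserted, not constructed --- yet this is where the real content of the theorem sits. Completely positive maps are not multiplicative, so from a triple $(\phi_n,\psi_n,\CB_n)$ approximating degree-zero elements $a$ you cannot simply manufacture one approximating $aS_p^d$: the map $b\mapsto\psi_n(b)S_p^d$ is not positive, and any error estimate forces you to recover the coefficient $a$ from $x=aS_p^d$ in a norm-controlled way compatible with complete positivity. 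The paper devotes Lemma~\ref{normineq} and Lemma~\ref{lemma21} precisely to this point: under $\Psi_m$, every $x\in\CB_{l,m,n}$ becomes a finite sum $\sum_j R_j\otimes v_j$ in which the tails $v_j$ run over a fixed finite list ($U^{j}S_p^{r-s}$, $S_p^{r-s}$, $S_p^{r-s}U$, $U^*S_p^{r-s}$) and the scalar matrices $R_j$ obey $\|R_j\|\le\|x\|$; consequently $\mathrm{id}_{M_{p^m}}\otimes(\psi_0\circ\phi_0)$ is automatically u.c.p.\ and the total error is bounded by the (constant in $n$) number of tails times the tail error. Your scheme contains no analogue of this coefficient-control lemma, and formulating one in your picture is strictly harder, since there the coefficients of the tails are $M_{p^M}(C(\IT))$-valued rather than scalar, so the arc discretization and the tail approximation must be intertwined. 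Your own closing paragraph concedes this construction is the unresolved ``main obstacle''; as written, the proposal proves the lower bound and sketches a plausible strategy, but does not prove Theorem~\ref{entropy}. (Two minor further points: the Kolmogorov--Sinai property requires controlling ${\rm rcp}$ of the unions $\bigcup_{j\le n}\chi_k^j(\omega)$, not only of $\chi_{k^n}(\omega)$ --- harmless for your construction, since all intermediate winding numbers are at most $C|k|^n$ --- and the rank contributed by the matrix factor is $p^{M}$, the dimension of its MASA, not $p^{2M}$.)
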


The proof requires some technical preliminary results, which are given below.
First,   introduce a countable family of finite sets whose linear span coincides with the whole
$p$-adic ring $C^*$-algebra.\\
For any $l,m,n \in\IN$,  the set $\CA_{l,m,n}$ is by definition the set of all monomials of the form $U^iS_p^m(S_p^*)^n U^j$, where $|i|$, $|j|\leq l$,  and  one of the following three conditions holds
\begin{enumerate}
\item $p^m> p^n> |j|$. 
\item $|i|< p^m< p^n$.
\item $p^m= p^n> |j|$.
\end{enumerate}

Finally, $\CB_{l,m,n}$ is the the vector space generated by $\CA_{l,m,n}$.

\begin{remark}\label{exponents}
The sets $\CA_{l,m,n}$ are mapped to $\CA_{lk,m,n}$ by the  winding endomorphism $\chi_{k}\in{\rm Aut}(\CQ_p)$.
Indeed, we have $\chi_{k}(U^iS_p^m(S_p^*)^nU^j)=U^{ik}S_p^m(S_p^*)^nU^{kj}$.
This means that the vector spaces $\CB_{l,m,n}$, too, are mapped to $\CB_{kl,m,n}$  by $\chi_k$.
\end{remark}

\begin{lemma}
The set $\cup_{l,m,n=0}^\infty \CA_{l,m,n }$  linearly generates a dense subspace of $\CQ_p$.
\end{lemma}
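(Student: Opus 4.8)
The plan is to reduce the statement to the description of $\CQ_p$ recalled in the preliminaries, namely that $\CQ_p$ is the norm closure of the linear span of the monomials $U^iS_p^h(S_p^*)^hU^j$ with $i,j\in\IZ$ and $h\in\IN$. Since enlarging the index family can only enlarge the closed span of $\cup_{l,m,n}\CA_{l,m,n}$, it suffices to show that every such generating monomial already lies in $\mathrm{span}\big(\cup_{l,h}\CA_{l,h,h}\big)$, i.e. in the span of the subfamily coming from condition (3). In particular the sets attached to conditions (1) and (2), where $m\neq n$, will play no role in the density argument.

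First I would record the two commutation identities obtained by iterating the defining relations $S_pU=U^pS_p$ and $US_p^*=S_p^*U^p$: for every $a\in\IZ$ and $h\in\IN$ one has $S_p^hU^a=U^{p^ha}S_p^h$ and $U^a(S_p^*)^h=(S_p^*)^hU^{p^ha}$, the latter of which rearranges to $(S_p^*)^hU^{p^hq}=U^q(S_p^*)^h$ for $q\in\IZ$. The heart of the argument is then a single normal-form computation. Given a generator $U^iS_p^h(S_p^*)^hU^j$, I would write $j=p^hq+r$ with $0\le r<p^h$ (Euclidean division) and move the surplus factor $U^{p^hq}$ leftward, first through $(S_p^*)^h$ and then through $S_p^h$:
$$
U^iS_p^h(S_p^*)^hU^j=U^iS_p^h\big(U^q(S_p^*)^hU^r\big)=U^{i}U^{p^hq}S_p^h(S_p^*)^hU^r=U^{i+p^hq}S_p^h(S_p^*)^hU^r.
$$
The resulting monomial has equal $S_p$- and $S_p^*$-exponents, $m=n=h$, and a right $U$-exponent $r$ with $|r|=r<p^h$, so it satisfies condition (3); choosing any $l\ge\max\{|i+p^hq|,\,r\}$ places it in $\CA_{l,h,h}$.

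Thus every generating monomial of $\CQ_p$ is, as an exact equality rather than merely an approximation, an element of $\cup_{l,h}\CA_{l,h,h}\subseteq\cup_{l,m,n}\CA_{l,m,n}$, whence the linear span of the latter is dense. The computation is essentially routine once the two commutation identities are in place, so I do not anticipate a genuine obstacle: the only points requiring care are the bookkeeping of the exponents in the reduction and the observation that condition (3) alone already suffices for density, the $m\neq n$ families being retained presumably because they are needed for the dimension counting in the entropy estimate rather than for this lemma.
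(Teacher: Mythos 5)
Your argument breaks down at its very first step: the premise that $\CQ_p$ is the closed linear span of the monomials $U^iS_p^h(S_p^*)^hU^j$ with \emph{equal} powers of $S_p$ and $S_p^*$ is false. Every such monomial is fixed by the gauge automorphisms, since $\alpha_z\bigl(U^iS_p^h(S_p^*)^hU^j\bigr)=U^iz^hS_p^h\bar{z}^h(S_p^*)^hU^j=U^iS_p^h(S_p^*)^hU^j$, so their closed linear span is contained in the fixed-point algebra $\CQ_p^\IT$, which is a \emph{proper} subalgebra of $\CQ_p$: it does not contain $S_p$ itself. The sentence in the preliminaries you lean on is a typo for $\CQ_p^\IT$; this is confirmed later in the paper, where the proof of Proposition \ref{index} recalls (``as already mentioned'') that it is $\CQ_p^\IT$ that is linearly generated by the equal-power monomials. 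Consequently your reduction, though correct as a computation, only shows that the sets $\CA_{l,h,h}$ of condition (3) span a dense subspace of the Bunce--Deddens subalgebra $\CQ_p^\IT$, not of $\CQ_p$.

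For the same reason, your closing remark that conditions (1) and (2) ``play no role in the density argument'' is wrong: they are precisely what captures the part of $\CQ_p$ lying outside $\CQ_p^\IT$. For instance $S_p=U^0S_p^1(S_p^*)^0U^0$ belongs to $\CA_{0,1,0}$ via condition (1), and it cannot be approximated by equal-power monomials, all of which are gauge invariant. The correct starting point, and the one the paper's proof takes, is the totality in $\CQ_p$ of the monomials $U^iS_p^m(S_p^*)^nU^j$ with arbitrary, possibly distinct, $m,n\in\IN$ (see \cite[Section 2]{ACRS}); one then runs your Euclidean-division-plus-commutation reduction separately in the three regimes where the defining inequalities fail, namely $p^m\geq p^n\leq|j|$, $|i|\geq p^m<p^n$, and $p^m=p^n\leq|j|$, lowering $|j|$ below $p^n$ or $|i|$ below $p^m$ as appropriate. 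Your computation is exactly the third of these; the two cases with $m\neq n$ must be carried out as well, and with them the lemma follows.
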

\begin{proof}
The monomials $\{U^iS_p^m(S_p^*)^n U^j\; | \; i, j \in\IZ, m,n\in\IN\}$ generate a dense subspace of $\CQ_p$ (see \cite[Section 2]{ACRS} and the references therein). The fact that we only need to consider the three aforementioned cases  is explained below. \\
If $p^m\geq p^n\leq |j|$, then $j=p^na+b$ (with $|b|<p^n$) and  
$$
U^iS_p^m(S_p^*)^n U^j=U^iS_p^m(S_p^*)^n U^{p^na+b}=U^iS_p^mU^a(S_p^*)^n U^{b}=U^{i+p^ma}S_p^m(S_p^*)^n U^{b}
$$
 If $|i|\geq p^m< p^n$, then $i=p^ma+b$ (with $|b|<p^m$) and  
$$
U^iS_p^m(S_p^*)^n U^j=U^{p^ma+b}S_p^m(S_p^*)^n U^j=U^{b}S_p^mU^a(S_p^*)^n U^j =U^{b}S_p^m(S_p^*)^n U^{j+p^na} 
$$
where we used $US_p^*=S_p^*U^p$.\\
If $p^m= p^n\leq |j|$, then $j=p^ma+b$ (with $|b|<p^m$) and 
$$
U^iS_p^m(S_p^*)^m U^j=U^iS_p^m(S_p^*)^m U^{p^ma+b}=U^iU^{p^ma}S_p^m(S_p^*)^m U^{b}
$$
where we used $U^{p^m}S_p^m(S_p^*)^m=S_p^m(S_p^*)^mU^{p^m}$.
\end{proof}

In the sequel we will repeatedly make use of the natural identification between $M_n(\IC)\otimes \CQ_p$  and $M_n(\CQ_p)$.\\
In the following lemma we single out an isomorphism between the $p$-adic ring
$C^*$-algebra $\CQ_p$ and its tensor product with  $p^h\times p^h$ matrices. This 
will be useful  in some of the subsequent computations.

\begin{lemma}
For any $p\geq 2$ and for any $h\geq 1$, the map $\Psi_h: \CQ_p\to M_{p^h}(\IC)\otimes \CQ_p$ given by $\Psi_h(x):=\sum_{i,j=0}^{p^h-1} e_{i,j}\otimes (S_p^*)^{h}U^{-i} x U^jS_p^h$, $x\in\CQ_p$, is an isomorphism.
\end{lemma}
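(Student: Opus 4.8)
The plan is to exhibit $\Psi_h$ as a unital $*$-homomorphism and then to construct an explicit two-sided inverse, so that it is an isomorphism. The whole argument rests on the observation that the $p^h$ elements $V_i:=U^iS_p^h$, $0\leq i\leq p^h-1$, form a Cuntz family inside $\CQ_p$. Indeed, the orthogonality relation $(S_p^*)^hU^{-i}U^jS_p^h=\delta_{i,j}$ recorded in the Preliminaries says exactly that $V_i^*V_j=\delta_{i,j}$, while the identity $\sum_{i=0}^{p^h-1}U^iS_p^h(S_p^*)^hU^{-i}=1$ says that $\sum_i V_iV_i^*=1$. In this notation $\Psi_h(x)=\sum_{i,j}e_{i,j}\otimes V_i^*xV_j$. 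Since $M_{p^h}(\IC)$ is finite-dimensional, the target is just $M_{p^h}(\CQ_p)$, so no completion of the tensor product is involved and every map below is defined on the whole space by finite linear combinations.

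First I would check that $\Psi_h$ is a unital $*$-homomorphism. Unitality follows from $V_i^*V_j=\delta_{i,j}$, since $\Psi_h(1)=\sum_{i,j}e_{i,j}\otimes V_i^*V_j=\sum_i e_{i,i}\otimes 1$, which is the unit of $M_{p^h}(\CQ_p)$. The $*$-property is immediate after relabelling the two indices. Multiplicativity is the one spot where the completeness relation enters: expanding $\Psi_h(x)\Psi_h(y)$ and using $e_{i,j}e_{k,l}=\delta_{j,k}e_{i,l}$ collapses the product to $\sum_{i,l}e_{i,l}\otimes V_i^*x(\sum_j V_jV_j^*)yV_l$, and $\sum_j V_jV_j^*=1$ yields $\Psi_h(xy)$.

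Next I would produce the inverse. Define $\Phi:M_{p^h}(\CQ_p)\to\CQ_p$ on elementary tensors by $\Phi(e_{i,j}\otimes a):=V_iaV_j^*$ and extend linearly. The same two relations show $\Phi$ is a $*$-homomorphism: $V_j^*V_k=\delta_{j,k}$ gives multiplicativity, and adjoints are handled by relabelling. Finally I would verify the two compositions. On one side, $\Phi(\Psi_h(x))=\sum_{i,j}V_iV_i^*xV_jV_j^*=(\sum_i V_iV_i^*)x(\sum_j V_jV_j^*)=x$, again by completeness. On the other, $\Psi_h(\Phi(e_{k,l}\otimes a))=\sum_{i,j}e_{i,j}\otimes V_i^*V_k\,aV_l^*V_j=e_{k,l}\otimes a$ by orthogonality, and by linearity this gives $\Psi_h\circ\Phi=\mathrm{id}$.

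Since both relations needed are already isolated in the Preliminaries, there is no serious obstacle here: the content is the standard fact that $p^h$ isometries with orthogonal ranges summing to the identity yield an isomorphism $\CQ_p\cong M_{p^h}(\CQ_p)$. The only point requiring a little care is the bookkeeping with the index ranges $0\leq i,j\leq p^h-1$, so that the orthogonality relation applies verbatim and the completeness relation is used with the correct upper limit.
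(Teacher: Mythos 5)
Your proof is correct, and while the two computational pillars coincide with the paper's (the multiplicativity check via the completeness relation is word-for-word the same, and your identity $\Psi_h(\Phi(e_{k,l}\otimes a))=e_{k,l}\otimes a$ is exactly the paper's surjectivity computation $\Psi_h\bigl(U^kS_p^h\,a\,(S_p^*)^hU^{-l}\bigr)=e_{k,l}\otimes a$), the way you conclude bijectivity is genuinely different. The paper proves injectivity by invoking the simplicity of $\CQ_p$ --- a nontrivial structural fact imported from outside the lemma --- and then proves surjectivity separately by hitting each elementary tensor; you instead package the map $e_{i,j}\otimes a\mapsto V_iaV_j^*$ (with $V_i=U^iS_p^h$) into an explicit $*$-homomorphism $\Phi$ and verify that both compositions $\Phi\circ\Psi_h$ and $\Psi_h\circ\Phi$ are the identity, so injectivity and surjectivity come for free from the Cuntz-family relations $V_i^*V_j=\delta_{i,j}$ and $\sum_i V_iV_i^*=1$ alone. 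Your route buys two things: it is self-contained (no appeal to simplicity, hence it would work verbatim for any unital $C^*$-algebra containing such a family of isometries), and it records the inverse $\Psi_h^{-1}(e_{i,j}\otimes a)=U^iS_p^h\,a\,(S_p^*)^hU^{-j}$ explicitly, which is the formula one actually uses later. The paper's route is shorter on the page, and it also re-derives the orthogonality of the isometries $U^iS_p^h$ by a calculation in the canonical representation, whereas you correctly observe that this is just the relation $(S_p^*)^hU^{-i}U^jS_p^h=\delta_{i,j}$, $0\leq i,j\leq p^h-1$, already recorded in the Preliminaries; either justification is fine.
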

\begin{proof}
It is enough to check that the map is multiplicative
\begin{align*}
\Psi_h(x)\Psi_h(y)&=\left( \sum_{i,j=0}^{p^h-1} e_{i,j}\otimes   (S_p^*)^{h}U^{-i} x U^jS_p^h\right)\left( \sum_{m,n=0}^{p^h-1} e_{m,n}  \otimes (S_p^*)^{h}U^{-m} y U^nS_p^h\right)\\
&= \sum_{i,j,m,n=0}^{p^h-1} e_{i,j}e_{m,n} \otimes   (S_p^*)^{h}U^{-i} x U^jS_p^h  (S_p^*)^{h}U^{-m} y U^nS_p^h\\
&= \sum_{i,j,m,n=0}^{p^h-1} \delta_{j,m} e_{i,j}e_{m,n} \otimes   (S_p^*)^{h}U^{-i} x U^j S_p^h (S_p^*)^{h}U^{-m} y U^nS_p^h\\
&= \sum_{i,j,n=0}^{p^h-1} e_{i,n} \otimes   (S_p^*)^{h}U^{-i} x U^jS_p^h  (S_p^*)^{h}U^{-j} y U^n S_p^h\\
&= \sum_{i,n=0}^{p^h-1} e_{i,n} \otimes   (S_p^*)^{h}U^{-i} x \left(\sum_{j=0}^{p^h-1} U^jS_p^h  (S_p^*)^{h}U^{-j} \right) y U^n S_p^h\\
&= \sum_{i,n=0}^{p^h-1} e_{i,n} \otimes   (S_p^*)^{h}U^{-i} xy U^n S_p^h\\
&=\Psi_h(xy)
\end{align*}
Injectivity follows from the simplicity of $\CQ_p$.\\
As for the surjectivity, it suffices to show that, for all $x\in\CQ_p$ and $i,j$, the element $e_{i,j}\otimes x$ is in the image of $\Psi_h$.
Indeed, we have
\begin{align*}
\Psi_h(U^iS_p^hx(S_p^h)^*U^{-j})&=\sum_{i',j'=0}^{p^h-1} e_{i',j'}\otimes (S_p^*)^{h}U^{-i'} (U^iS_p^hx(S_p^h)^*U^{-j}) U^{j'}S_p^h\\
&=e_{i,j}\otimes x
\end{align*} 
where in last step we used that $\{U^iS_p^h: i=0, 1, \ldots, p^h-1\}$ is a family of mutually orthogonal isometries,
as can be checked in the canonical representation. Indeed, for any $k\in\IZ$ we have
$$(S_p^*)^hU^{-j}U^iS_p^he_k=(S_p^*)^h e_{kp^h+(i-j)}$$
which means $(S_p^*)^hU^{-j}U^iS_p^he_k=0$ if $i\neq j$ because $i-j$ is never a multiple of
$p^h$.
\end{proof}
In the following lemma we point out an inequality which will come in useful in the next. 
\begin{lemma}\label{normineq}
Let $d$ be an integer number and $\{Q_i\}_{i=1}^5$, $\{R_j\}_{j=1}^3\subset M_d(\IC)$. If we define $A, B \in M_d(\CQ_p)$ as
\begin{eqnarray*}
&& A:=Q_1\otimes S_p^{m-n}+Q_2\otimes S_p^{m-n}U+ Q_3\otimes S_p^{m-n}U^*+ Q_4\otimes US_p^{m-n}+ Q_5\otimes U^*S_p^{m-n}\\
&& B:= R_1\otimes 1+  R_2\otimes U+R_3\otimes U^*
\end{eqnarray*}
for any pair of integer numbers with $m>n$,
we have $\|Q_i\|\leq \|A\|$ for any $i=1, 2, 3, 4, 5$ and $\|R_j\|\leq \|B\|$ for any $j=1, 2, 3$.
\end{lemma}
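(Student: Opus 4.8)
The plan is to work inside the canonical representation $\pi:\CQ_p\to\CB(\ell^2(\IZ))$. Since $\CQ_p$ is simple and $\pi$ is nonzero, $\pi$ is faithful, hence isometric; its matrix amplification $\mathrm{id}\otimes\pi$ on $\IC^d\otimes\ell^2(\IZ)$ is isometric as well, so I may compute the norms of $A$ and $B$ as operator norms there. The underlying idea is that, because $\pi(U)$ and $\pi(S_p)$ act on the canonical basis $\{e_k\}$ merely by relabelling indices, applying $A$ (resp.\ $B$) to a vector $\xi\otimes e_k$ spreads it over finitely many basis vectors $e_{k'}$, \emph{each} of which carries exactly one coefficient $Q_i\xi$ (resp.\ $R_j\xi$). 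One then recovers each coefficient as a single matrix coefficient of $A$ (resp.\ $B$), which is automatically dominated by the operator norm.

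Concretely, I would set $s:=m-n\geq 1$ and record how the relevant operators act on basis vectors:
\[
S_p^{s}e_k=e_{p^{s}k},\quad S_p^{s}Ue_k=e_{p^{s}k+p^{s}},\quad S_p^{s}U^*e_k=e_{p^{s}k-p^{s}},\quad US_p^{s}e_k=e_{p^{s}k+1},\quad U^*S_p^{s}e_k=e_{p^{s}k-1},
\]
so that
\[
A(\xi\otimes e_k)=Q_1\xi\otimes e_{p^{s}k}+Q_2\xi\otimes e_{p^{s}k+p^{s}}+Q_3\xi\otimes e_{p^{s}k-p^{s}}+Q_4\xi\otimes e_{p^{s}k+1}+Q_5\xi\otimes e_{p^{s}k-1}.
\]
The decisive point is that the five shifts $\{0,\,p^{s},\,-p^{s},\,1,\,-1\}$ are pairwise distinct, which holds precisely because $m>n$ and $p\geq 2$ force $p^{s}\geq 2$. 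Hence the five summands sit in mutually orthogonal subspaces $\IC^d\otimes\IC e_{k'}$.

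Then, fixing $k=0$ and choosing for each $i$ the corresponding target index $k_i'\in\{0,\pm p^{s},\pm 1\}$, the inner product isolates a single coefficient:
\[
\langle \eta\otimes e_{k_i'},\,A(\xi\otimes e_0)\rangle=\langle \eta,\,Q_i\xi\rangle,\qquad \xi,\eta\in\IC^d.
\]
Taking the supremum over unit vectors $\xi,\eta$ and using $\|\xi\otimes e_0\|=\|\xi\|$ and $\|\eta\otimes e_{k_i'}\|=\|\eta\|$ yields $\|Q_i\|=\sup_{\xi,\eta}|\langle\eta,Q_i\xi\rangle|\leq\|A\|$ for each $i$. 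The estimate for $B$ is identical but simpler: here $B(\xi\otimes e_k)=R_1\xi\otimes e_k+R_2\xi\otimes e_{k+1}+R_3\xi\otimes e_{k-1}$, the three shifts $\{0,1,-1\}$ are manifestly distinct, and the same matrix-coefficient argument gives $\|R_j\|\leq\|B\|$.

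The computation is essentially bookkeeping; the only thing that could go wrong is a collision among the index shifts, which would prevent the inner product from isolating a single coefficient. This is exactly where the hypothesis $m>n$ enters: it guarantees that $S_p^{m-n}$ is a genuine power of $S_p$ and that $p^{s}\geq 2$, so that the offsets are separated. Were $m=n$, the ``$S_p^{m-n}$'' factor would collapse to the identity and the offsets $\pm p^{s}=\pm 1$ would coincide with those of the $U$- and $U^*$-terms, destroying the orthogonality that makes the extraction work.
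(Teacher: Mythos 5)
Your proof is correct and takes essentially the same approach as the paper's: both pass to the (faithful, by simplicity) canonical representation on $\IC^d\otimes\ell^2(\IZ)$ and isolate each $Q_i$ (resp.\ $R_j$) as a matrix coefficient of $A$ (resp.\ $B$) between elementary tensors, using that the five operators $S_p^{m-n},\,S_p^{m-n}U,\,S_p^{m-n}U^*,\,US_p^{m-n},\,U^*S_p^{m-n}$ send a fixed basis vector to pairwise distinct basis vectors once $p^{m-n}\geq 2$. The only cosmetic difference is your choice of the base vector $e_0$ (offsets $\{0,\pm p^{m-n},\pm 1\}$) where the paper uses $e_1$.
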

\begin{proof}
We will only treat $A$, for $B$ can be dealt with even more easily.
We think of $M_d(\CQ_p)\cong M_d(\IC)\otimes \CQ_p$ as a concrete
$C^*$-algebra acting on the tensor Hilbert space $\IC^d\otimes\ell^2(\IZ)$.
For any $x, x'\in \IC^d$ and $y, y'\in \ell^2(\IZ)$  with 
$\|x\|, \|x'\|, \|y\|, \|y'\|\leq 1$
we have:
\begin{eqnarray*}
&&\|A\|\geq |(A(x\otimes y), x'\otimes y')|=\\
&& | (Q_1x, x') (S_p^{m-n}y, y')+   (Q_2x, x') (S_p^{m-n}Uy, y')+(Q_3 x, x') (S_p^{m-n}U^*y, y')\\
&&\qquad +(Q_4x, x') (US_p^{m-n}y, y')+(Q_5x, x') (U^*S_p^{m-n}y, y')  |
\end{eqnarray*} 
There are now five cases to consider.
\begin{itemize}
\item Choosing $y=e_1$ and $y'=S_p^{m-n}e_1= e_{p^{m-n}}$ the inequality simply becomes
$\|A\|\geq |(Q_1x, x')|$ as the remaining four terms are separately zero as the product of two factors, the second of which vanishes by
construction. Taking the sup on $x, x'$ running
on the unit ball of $\IC^d$ the inequality in the statement is obtained.
\item Choosing $y=e_1$ and $y'=S_p^{m-n}Ue_1=e_{2 p^{(m-n)}}$ we now find $\|A\|\geq |(Q_2x, x')|$ and the conclusion follows.
\item Choosing $y=e_1$ and $y'=S_p^{m-n}U^*e_1=e_0$ we now find $\|A\|\geq |(Q_3x, x')|$ and the conclusion follows.
\item Choosing $y=e_1$ and $y'=US_p^{m-n}e_1=e_{p^{m-n}+1}$ we now find $\|A\|\geq |(Q_4x, x')|$ and the conclusion follows.
\item Choosing $y=e_1$ and $y'=U^*S_p^{m-n}e_1=e_{ p^{(m-n)}-1}$ we now find $\|A\|\geq |(Q_5x, x')|$ and the conclusion follows.
\end{itemize}
\end{proof}
The following lemma is one of the main ingredients in the proof of an upper bound for the entropy of the winding endomorphisms.
\begin{lemma}\label{lemma21}
Let $h, l,m,n\in\IN$, with $h>\max\{m,n\}$, $l< p^{h}$, and $x\in \CB_{l,m,n}$. We have:
\begin{itemize}
\item If $m>n$, then $\Psi_h(x)= \sum_{j=1}^{p^{m-n}-1} R_{j}\otimes U^{j}S_p^{m-n}+R_{0}\otimes S_p^{m-n}+\tilde R_{0}\otimes S_p^{m-n}U+\hat R_{0}\otimes U^*S_p^{m-n}$ where 
$R_{j}, R_0, \tilde{R}_0, \hat{R}_0 \in M_{p^h}(\IC)$, with $\| R_{j}\|\leq \| x\|$, $\| R_{0}\|\leq \| x\|$, $\| \tilde R_{0}\|\leq \| x\|$, $\| \hat R_{0}\|\leq \| x\|$.
\item If $m=n$, then $\Psi_h(x)=R_1\otimes 1+ R_2\otimes U+R_3\otimes U^*$ where 
$R_1,  R_2, R_3\in M_{p^h}(\IC)$, and $\| R_i\|\leq  \| x\|$ for all $i$.
\item If $m<n$, then $\Psi_h(x)= \sum_{j=1}^{p^{n-m}-1} R_{j}\otimes (S_p^{n-m})^*U^{-j}+R_{0}\otimes S_p^{n-m}+\tilde R_{0}\otimes S_p^{m-n}U+\hat R_{0}\otimes U^*S_p^{m-n}$ where 
$R_{j}, R_0, \tilde{R}_0, \hat{R}_0\in M_{p^h}(\IC)$, $\| R_{j}\|\leq \| x\|$, with $\| R_{0}\|\leq \| x\|$, $\| \tilde R_{0}\|\leq \| x\|$, $\| \hat R_{0}\|\leq \| x\|$.
\end{itemize}
\end{lemma}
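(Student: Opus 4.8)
The plan is to use the linearity of $\Psi_h$ to reduce to a single generating monomial, to compute its image entry by entry, and then to read off the coefficient matrices together with their norm bounds. Since $\CB_{l,m,n}$ is by definition the linear span of $\CA_{l,m,n}$ and $\Psi_h$ is linear, it suffices to establish the stated form when $x=U^aS_p^m(S_p^*)^nU^b$ is a single element of $\CA_{l,m,n}$, i.e.\ $|a|,|b|\le l$ subject to the relevant one of the three defining conditions; the general statement then follows by assembling the coefficient matrices linearly.

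First I would compute the generic matrix entry. By definition the $(i,j)$ entry of $\Psi_h(x)$ is $(S_p^*)^hU^{a-i}S_p^m(S_p^*)^nU^{b+j}S_p^h$. Splitting this as $\bigl[(S_p^*)^hU^{a-i}S_p^m\bigr]\bigl[(S_p^*)^nU^{b+j}S_p^h\bigr]$, writing $(S_p^*)^h=(S_p^*)^{h-m}(S_p^*)^m$ and $S_p^h=S_p^nS_p^{h-n}$ (legitimate precisely because $h>\max\{m,n\}$), and using $S_p^eU^k=U^{p^ek}S_p^e$ (so that $U^cS_p^e=U^rS_p^eU^q$ whenever $c=p^eq+r$) together with the orthogonality relation $(S_p^*)^eU^rS_p^e=\delta_{r,0}$ valid for $0\le r<p^e$, each factor collapses: writing $a-i=p^mq+r$ and $b+j=p^nq'+r'$ with $0\le r<p^m$ and $0\le r'<p^n$, the entry equals $\delta_{r,0}\delta_{r',0}\,(S_p^*)^{h-m}U^{q+q'}S_p^{h-n}$. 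Evaluating this last expression in the faithful canonical representation shows it vanishes unless $p^{h-m}\mid q+q'$, in which case it equals $U^{s'}S_p^{m-n}$ with $s'=(q+q')/p^{h-m}$ when $m\ge n$, and an adjoint expression built from $(S_p^{n-m})^*$ when $m<n$. Thus every entry is either $0$ or a single term $U^{s'}S_p^{m-n}$, and grouping the matrix units $e_{i,j}$ by the value of $s'$ already produces a decomposition of the announced shape.

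The heart of the matter, and where I expect the main obstacle, is to confine $s'$ to the set $\{-1,0,1,\dots,p^{m-n}\}$; this is the only point at which all three hypotheses are used simultaneously. From $|a|\le l<p^h$ and $0\le i\le p^h-1$ one gets $q=(a-i)/p^m\in(-2p^{h-m},p^{h-m})$, whereas the defining inequality of $\CA_{l,m,n}$ in the case $m>n$, namely $|b|<p^n$, forces $b+j\ge0$ and hence $q'=(b+j)/p^n\in\{0,1,\dots,p^{h-n}\}$; dividing the sum by $p^{h-m}$ yields $s'\in(-2,1+p^{m-n})$, so the integer $s'$ runs exactly through $\{-1,0,\dots,p^{m-n}\}$. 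Getting the two boundary values right is the delicate part: $s'=-1$ gives $U^*S_p^{m-n}$ (the coefficient $\hat R_0$), $s'=0$ gives $S_p^{m-n}$ (the coefficient $R_0$), $s'=p^{m-n}$ gives $U^{p^{m-n}}S_p^{m-n}=S_p^{m-n}U$ (the coefficient $\tilde R_0$), and $s'=1,\dots,p^{m-n}-1$ give the $R_{s'}$. The same estimate read with $m=n$ confines $s'$ to $\{-1,0,1\}$ and produces the terms $1,U,U^*$, while the $m<n$ case follows by taking adjoints, since $x^*\in\CA_{l,n,m}$ satisfies the hypothesis of the $m>n$ case with the roles of $m$ and $n$ interchanged.

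It remains to bound the coefficient matrices. Since $\Psi_h$ is a $*$-isomorphism it is isometric, so $\|\Psi_h(x)\|=\|x\|$ and it is enough to bound each coefficient by $\|\Psi_h(x)\|$. This is exactly the vector-testing argument of Lemma \ref{normineq}, now carried out for the slightly larger family $\{U^{s'}S_p^{m-n}:-1\le s'\le p^{m-n}\}$: realizing $M_{p^h}(\IC)\otimes\CQ_p$ on $\IC^{p^h}\otimes\ell^2(\IZ)$ and pairing $\Psi_h(x)$ with $e_1$ and with $U^{s'}S_p^{m-n}e_1=e_{p^{m-n}+s'}$ isolates $R_{s'}$, because the vectors $e_{p^{m-n}+s'}$ are pairwise distinct as $s'$ ranges over $\{-1,\dots,p^{m-n}\}$; this gives $\|R_{s'}\|\le\|\Psi_h(x)\|=\|x\|$, and similarly for $R_0,\tilde R_0,\hat R_0$. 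In the case $m=n$ the decomposition is literally the element $B$ of Lemma \ref{normineq}, so that lemma applies verbatim.
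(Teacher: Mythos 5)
Your proof is correct, and it takes a genuinely different (and leaner) route than the paper's. For the decomposition, the paper fixes a monomial $x=U^aS_p^m(S_p^*)^nU^d$ and runs four separate subcases according to the signs of $a$ and $d$, each time expanding the matrix-unit indices ($i=i_1+i_2p^m$, $j=j_1+p^nj_2$, $j_2=j_3+j_4p^{h-m}$) through long telescoping sums and then isolating the exceptional multiples of $p^{h-m}$ that produce the boundary terms $\tilde R_0\otimes S_p^{m-n}U$ and $\hat R_0\otimes U^*S_p^{m-n}$. Your single computation of the generic $(i,j)$ entry, via the division algorithm and the orthogonality relation $(S_p^*)^eU^rS_p^e=\delta_{r,0}$, handles all sign configurations at once and makes it transparent why precisely the exponents $s'\in\{-1,0,\dots,p^{m-n}\}$ occur. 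For the norm estimates the paper argues in two steps: positivity of $\sum_j R_j^*R_j\otimes 1+A^*A$ for the middle coefficients, then Lemma \ref{normineq} for $R_0,\tilde R_0,\hat R_0$; this implicitly relies on cross-term cancellations that are delicate at the boundary index $j=p^{m-n}-1$, where $(S_p^*)^{m-n}U^{-p^{m-n}}S_p^{m-n}=U^{-1}\neq 0$. Your uniform vector-testing argument -- pairing against $e_1$ and the pairwise distinct vectors $e_{p^{m-n}+s'}$, combined with the isometry of $\Psi_h$ -- bounds all coefficients at once and never needs such cancellations. Both proofs reduce to monomials by linearity and treat $m<n$ by adjoints. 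Two points to state explicitly in a final write-up: (i) linearity transfers only the \emph{shape} of the decomposition to general $x\in\CB_{l,m,n}$, not the bounds $\|R_j\|\le\|x\|$; your testing argument does give the bounds for general $x$ because it uses only the shape of $\Psi_h(x)$, so say so, since "assembling the coefficient matrices linearly" alone would not suffice. (ii) $|b|<p^n$ does not force $b+j\ge 0$ for every $j$; it does so only on the nonvanishing entries, where $p^n\mid b+j$ and $b+j>-p^n$, which is all you actually use.
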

\begin{proof}
Without loss of generality, we may suppose that $x=U^aS_p^m(S_p^*)^nU^d$. 
We start by dealing with the first case, that is $m>n$. In turn we will have to settle
four subcases depending on the signs of $a$ and $d$.\\

Suppose  that $a\geq 0$, $d\leq 0$, $|d|<p^n$. 
We have $a=p^mb+r$ (with $0\leq r<p^m$).
Note that $0\leq b<p^{h-m}$. 
Then we have
\begin{align*}
\Psi_h(x)&=\sum_{i,j=0}^{p^h-1} e_{i,j}\otimes (S_p^*)^{h}U^{-i} U^aS_p^m(S_p^*)^nU^d U^jS_p^h\\
&=\sum_{i,j=0}^{p^h-1} e_{i,j}\otimes (S_p^*)^{h-m} ((S_p^*)^{m}U^{-i} U^{r}U^{p^mb}S_p^m)(S_p^*)^nU^d U^jS_p^h\\
&=\sum_{i,j=0}^{p^h-1} e_{i,j}\otimes (S_p^*)^{h-m} ((S_p^*)^{m}U^{-i} U^rS_p^m)U^b(S_p^*)^nU^d U^jS_p^h\\
&=\sum_{j=0}^{p^h-1}\sum_{i_1=0}^{p^m-1}\sum_{i_2=0}^{p^{h-m}-1} e_{i_1+i_2p^m,j}\otimes (S_p^*)^{h-m} ((S_p^*)^{m}U^{-(i_1+i_2p^m)} U^rS_p^m)U^b(S_p^*)^nU^d U^jS_p^h\\
&=\sum_{j=0}^{p^h-1}\sum_{i_1=0}^{p^m-1}\sum_{i_2=0}^{p^{h-m}-1} e_{i_1+i_2p^m,j}\otimes (S_p^*)^{h-m} U^{-i_2} ((S_p^*)^{m}U^{-i_1} U^rS_p^m)U^b(S_p^*)^nU^d U^jS_p^h\\
&=\sum_{j=0}^{p^h-1}\sum_{i_2=0}^{p^{h-m}-1}e_{r+i_2p^m,j}\otimes (S_p^*)^{h-m}   U^{-i_2+b} (S_p^*)^nU^d U^jS_p^h \\
&=\sum_{j=0}^{p^h-1} \sum_{i_2=0}^{p^{h-m}-1}e_{r+i_2p^m,j}\otimes (S_p^*)^{h-m} U^{-i_2+b} ((S_p^*)^nU^d U^jS_p^n) S_p^{h-n}\\
&=\sum_{j_1=0}^{p^n-1}\sum_{j_2=0}^{p^{h-n}-1} \sum_{i_2=0}^{p^{h-m}-1}e_{r+i_2p^m,j_1+p^{n}j_2}\otimes (S_p^*)^{h-m} U^{-i_2+b} ((S_p^*)^nU^d U^{j_1+p^{n}j_2}S_p^n) S_p^{h-n}\\
&=\sum_{j_1=0}^{p^n-1}\sum_{j_2=0}^{p^{h-n}-1} \sum_{i_2=0}^{p^{h-m}-1}e_{r+i_2p^m,j_1+p^{n}j_2}\otimes (S_p^*)^{h-m} U^{-i_2+b} ((S_p^*)^nU^d U^{j_1}S_p^n)U^{j_2} S_p^{h-n}\\
&=\sum_{j_2=0}^{p^{h-n}-1} \sum_{i_2=0}^{p^{h-m}-1}e_{r+i_2p^m,-d+p^{n}j_2}\otimes (S_p^*)^{h-m} U^{-i_2+b} U^{j_2} S_p^{h-n}\\
&=\sum_{j_2=0}^{p^{h-n}-1} \sum_{i_2=0}^{p^{h-m}-1}e_{r+i_2p^m,-d+p^{n}j_2}\otimes (S_p^*)^{h-m} U^{-i_2+b+j_2} S_p^{h-m}S_p^{m-n}\\
&=\sum_{j_3=0}^{p^{h-m}-1}\sum_{j_4=0}^{p^{m-n}-1} \sum_{i_2=0}^{p^{h-m}-1}e_{r+i_2p^m,-d+p^{n}(j_3+j_4p^{h-m})}\otimes (S_p^*)^{h-m} U^{-i_2+b} U^{j_3+j_4p^{h-m}} S_p^{h-m}S_p^{m-n}\\
&= \sum_{j_3=0}^{p^{h-m}-1}\sum_{j_4=0}^{p^{m-n}-1} \sum_{i_2=0}^{p^{h-m}-1}e_{r+i_2p^m,-d+p^{n}(j_3+j_4p^{h-m})}\otimes (S_p^*)^{h-m} U^{-i_2+b} U^{j_3} S_p^{h-m}U^{j_4}S_p^{m-n}\\
\end{align*}
Now since $-p^{h-m}+1\leq-i_2+b+j_3\leq 2p^{h-m}-2$, there is only one nontrivial multiple of
$p^{h-m}$ among the values taken by  $-i_2+b+j_3$, namely $p^{h-m}$ itself. Therefore, the last expression we had can be rewritten as
\begin{align*}
& \sum_{j_4=0}^{p^{m-n}-1} \sum_{i_2=0}^{p^{h-m}-1}e_{r+i_2p^m,-d+p^{n}(i_2-b+j_4p^{h-m})}\otimes U^{j_4}S_p^{m-n} +\\
&\qquad + \sum_{j_4=0}^{p^{m-n}-1} \sum_{i_2=0}^{p^{h-m}-1}e_{r+i_2p^m,-d+p^{n}(i_2-b+p^{h-m}+j_4p^{h-m})}\otimes U^{j_4+1}S_p^{m-n} \\
&= \sum_{j_4=0}^{p^{m-n}-1} \sum_{i_2=0}^{p^{h-m}-1}e_{r+i_2p^m,-d+p^{n}(i_2-b+j_4p^{h-m})}\otimes U^{j_4}S_p^{m-n} \\
&\qquad + \sum_{j_4=1}^{p^{m-n}} \sum_{i_2=0}^{p^{h-m}-1}e_{r+i_2p^m,-d+p^{n}(i_2-b+p^{h-m}+j_4p^{h-m}-p^{h-m})}\otimes U^{j_4}S_p^{m-n} \\
&= \sum_{j_4=0}^{p^{m-n}-1} \sum_{i_2=0}^{p^{h-m}-1}e_{r+i_2p^m,-d+p^{n}(i_2-b+j_4p^{h-m})}\otimes U^{j_4}S_p^{m-n} \\
&\qquad + \sum_{j_4=1}^{p^{m-n}-1} \sum_{i_2=0}^{p^{h-m}-1}e_{r+i_2p^m,-d+p^{n}(i_2-b+p^{h-m}+j_4p^{h-m}-p^{h-m})}\otimes U^{j_4}S_p^{m-n} \\
&\qquad + \sum_{i_2=0}^{p^{h-m}-1}e_{r+i_2p^m,-d+p^{n}(i_2-b+p^{h-m}+p^{m-n}p^{h-m}-p^{h-m})}\otimes U^{p^{m-n}}S_p^{m-n} \\
\end{align*}
and in the last sum of the above expression we easily recognize a term of the form $\tilde R_{0}\otimes S_p^{m-n}U$.


Suppose that $a\leq 0$, $d\leq 0$, $|d|<p^n$. 
We have $a=p^mb+r$ (with $0\leq r<p^m$).
Note that $0\leq  -b\leq p^{h-m}$. 
Then we have
\begin{align*}
\Psi_h(x)&=\sum_{i,j=0}^{p^h-1} e_{i,j}\otimes (S_p^*)^{h}U^{-i} U^aS_p^m(S_p^*)^nU^d U^jS_p^h\\
&=\sum_{i,j=0}^{p^h-1} e_{i,j}\otimes (S_p^*)^{h}U^{-i} U^rS_p^mU^b(S_p^*)^nU^d U^jS_p^h\\
\end{align*}
\begin{align*}
&=\sum_{i,j=0}^{p^h-1} e_{i,j}\otimes (S_p^*)^{h-m} ((S_p^*)^{m}U^{-i} U^rS_p^m)U^b(S_p^*)^nU^d U^jS_p^h\\
&=\sum_{j=0}^{p^h-1}\sum_{i_1=0}^{p^m-1}\sum_{i_2=0}^{p^{h-m}-1} e_{i_1+i_2p^m,j}\otimes (S_p^*)^{h-m} ((S_p^*)^{m}U^{-(i_1+i_2p^m)} U^rS_p^m)U^b(S_p^*)^nU^d U^jS_p^h\\
&=\sum_{j=0}^{p^h-1}\sum_{i_1=0}^{p^m-1}\sum_{i_2=0}^{p^{h-m}-1} e_{i_1+i_2p^m,j}\otimes (S_p^*)^{h-m} U^{-i_2} ((S_p^*)^{m}U^{-i_1} U^rS_p^m)U^b(S_p^*)^nU^d U^jS_p^h\\
&=\sum_{j=0}^{p^h-1}\sum_{i_2=0}^{p^{h-m}-1}e_{r+i_2p^m,j}\otimes (S_p^*)^{h-m}   U^{-i_2+b} (S_p^*)^nU^d U^jS_p^h \\
&=\sum_{j=0}^{p^h-1} \sum_{i_2=0}^{p^{h-m}-1}e_{r+i_2p^m,j}\otimes (S_p^*)^{h-m} U^{-i_2+b} ((S_p^*)^nU^d U^jS_p^n) S_p^{h-n}\\
&=\sum_{j_1=0}^{p^n-1}\sum_{j_2=0}^{p^{h-n}-1} \sum_{i_2=0}^{p^{h-m}-1}e_{r+i_2p^m,j_1+p^{n}j_2}\otimes (S_p^*)^{h-m} U^{-i_2+b} ((S_p^*)^nU^d U^{j_1+p^{n}j_2}S_p^n) S_p^{h-n}\\
&=\sum_{j_1=0}^{p^n-1}\sum_{j_2=0}^{p^{h-n}-1} \sum_{i_2=0}^{p^{h-m}-1}e_{r+i_2p^m,j_1+p^{n}j_2}\otimes (S_p^*)^{h-m} U^{-i_2+b} ((S_p^*)^nU^d U^{j_1}S_p^n)U^{j_2} S_p^{h-n}\\
&=\sum_{j_2=0}^{p^{h-n}-1} \sum_{i_2=0}^{p^{h-m}-1}e_{r+i_2p^m,-d+p^{n}j_2}\otimes (S_p^*)^{h-m} U^{-i_2+b+j_2} S_p^{h-m}S_p^{m-n}\\
&=\sum_{j_3=0}^{p^{h-m}-1}\sum_{j_4=0}^{p^{m-n}-1} \sum_{i_2=0}^{p^{h-m}-1}e_{r+i_2p^m,-d+p^{n}(j_3+j_4p^{h-m})}\otimes (S_p^*)^{h-m} U^{-i_2+b} U^{j_3+j_4p^{h-m}} S_p^{h-m}S_p^{m-n}\\
&= \sum_{j_3=0}^{p^{h-m}-1}\sum_{j_4=0}^{p^{m-n}-1} \sum_{i_2=0}^{p^{h-m}-1}e_{r+i_2p^m,-d+p^{n}(j_3+j_4p^{h-m})}\otimes (S_p^*)^{h-m} U^{-i_2+b} U^{j_3} S_p^{h-m}U^{j_4}S_p^{m-n}
\end{align*}
Now since $-2p^{h-m}+1\leq b+j_3-i_2\leq p^{h-m}-1$, there is only one nontrivial multiple of
$p^{h-m}$ among the values taken by  $-i_2+b+j_3$, namely -$p^{h-m}$ itself. Therefore, the last expression we had can be rewritten as
\begin{align*}
&\sum_{j_4=0}^{p^{m-n}-1} \sum_{i_2=0}^{p^{h-m}-1}e_{r+i_2p^m,-d+p^{n}(i_2-b+j_4p^{h-m})}\otimes U^{j_4}S_p^{m-n} \\
&\qquad + \sum_{j_4=0}^{p^{m-n}-1} \sum_{i_2=0}^{p^{h-m}-1}e_{r+i_2p^m,-d+p^{n}(i_2-b-p^{h-m}+j_4p^{h-m})}\otimes U^{j_4-1}S_p^{m-n} \\
\end{align*}
Now in the second sum of the above expression the summand corresponding to $j_4=0$ accounts for the presence of a term
of the type $\hat R_{0}\otimes U^*S_p^{m-n}$, as in the statement.\\

Now we assume $a\geq 0$, $d\geq 0$, $|d|<p^n$. 
We have $a=p^mb+r$ (with $0\leq r<p^m$).
Note that $0\leq  b<p^{h-m}$. 
Then we have
\begin{align*}
&\Psi_h(x)=\sum_{i,j=0}^{p^h-1} e_{i,j}\otimes (S_p^*)^{h}U^{-i} U^aS_p^m(S_p^*)^nU^d U^jS_p^h\\
&=\sum_{i,j=0}^{p^h-1} e_{i,j}\otimes (S_p^*)^{h-m} ((S_p^*)^{m}U^{-i} U^rS_p^m)U^b(S_p^*)^nU^d U^jS_p^h\\
&=\sum_{j=0}^{p^h-1}\sum_{i_1=0}^{p^m-1}\sum_{i_2=0}^{p^{h-m}-1} e_{i_1+i_2p^m,j}\otimes (S_p^*)^{h-m} ((S_p^*)^{m}U^{-(i_1+i_2p^m)} U^rS_p^m)U^b(S_p^*)^nU^d U^jS_p^h\\
&=\sum_{j=0}^{p^h-1}\sum_{i_1=0}^{p^m-1}\sum_{i_2=0}^{p^{h-m}-1} e_{i_1+i_2p^m,j}\otimes (S_p^*)^{h-m} U^{-i_2} ((S_p^*)^{m}U^{-i_1} U^rS_p^m)U^b(S_p^*)^nU^d U^jS_p^h\\
&=\sum_{j=0}^{p^h-1}\sum_{i_2=0}^{p^{h-m}-1}e_{r+i_2p^m,j}\otimes (S_p^*)^{h-m}   U^{-i_2+b} (S_p^*)^nU^d U^jS_p^h \\
&=\sum_{j=0}^{p^h-1} \sum_{i_2=0}^{p^{h-m}-1}e_{r+i_2p^m,j}\otimes (S_p^*)^{h-m} U^{-i_2+b} ((S_p^*)^nU^d U^jS_p^n) S_p^{h-n}\\
&=\sum_{j_1=0}^{p^n-1}\sum_{j_2=0}^{p^{h-n}-1} \sum_{i_2=0}^{p^{h-m}-1}e_{r+i_2p^m,j_1+p^{n}j_2}\otimes (S_p^*)^{h-m} U^{-i_2+b} ((S_p^*)^nU^d U^{j_1+p^{n}j_2}S_p^n) S_p^{h-n}\\
&=\sum_{j_1=0}^{p^n-1}\sum_{j_2=0}^{p^{h-n}-1} \sum_{i_2=0}^{p^{h-m}-1}e_{r+i_2p^m,j_1+p^{n}j_2}\otimes (S_p^*)^{h-m} U^{-i_2+b} ((S_p^*)^nU^d U^{j_1}S_p^n)U^{j_2} S_p^{h-n}\\
&=\sum_{j_2=0}^{p^{h-n}-1} \sum_{i_2=0}^{p^{h-m}-1}e_{r+i_2p^m,p^n-d+p^{n}j_2}\otimes (S_p^*)^{h-m} U^{-i_2+b} U^{j_2+1} S_p^{h-n}\\
&=\sum_{j_2=0}^{p^{h-n}-1} \sum_{i_2=0}^{p^{h-m}-1}e_{r+i_2p^m,p^n-d+p^{n}j_2}\otimes (S_p^*)^{h-m} U^{-i_2+b+j_2+1} S_p^{h-m}S_p^{m-n}\\
&=\sum_{j_3=0}^{p^{h-m}-1}\sum_{j_4=0}^{p^{m-n}-1} \sum_{i_2=0}^{p^{h-m}-1}e_{r+i_2p^m,p^n-d+p^{n}(j_3+j_4p^{h-m})}\otimes (S_p^*)^{h-m} U^{-i_2+b+1} U^{j_3+j_4p^{h-m}} S_p^{h-m}S_p^{m-n}\\
\end{align*}
\begin{align*}
&= \sum_{j_3=0}^{p^{h-m}-1}\sum_{j_4=0}^{p^{m-n}-1} \sum_{i_2=0}^{p^{h-m}-1}e_{r+i_2p^m,p^n-d+p^{n}(j_3+j_4p^{h-m})}\otimes (S_p^*)^{h-m} U^{-i_2+b+1} U^{j_3} S_p^{h-m}U^{j_4}S_p^{m-n}
\end{align*}
Now since $-p^{h-m}+2\leq  -i_2+b+1+j_3 \leq 2p^{h-m}-1$, there is only one nontrivial multiple of
$p^{h-m}$ among the values taken by  $-i_2+b+j_3$, namely $p^{h-m}$ itself. Therefore, the last expression we had can be rewritten as
\begin{align*}
& \sum_{j_4=0}^{p^{m-n}-1} \sum_{i_2=0}^{p^{h-m}-1}e_{r+i_2p^m,p^n-d+p^{n}(i_2-b-1+j_4p^{h-m})}\otimes U^{j_4}S_p^{m-n} \\
&\qquad + \sum_{j_4=0}^{p^{m-n}-1} \sum_{i_2=0}^{p^{h-m}-1}e_{r+i_2p^m,p^n-d+p^{n}(i_2-b-1+p^{h-m}+j_4p^{h-m})}\otimes U^{j_4+1}S_p^{m-n} 
\end{align*}

Finally, we discuss the forth subcase: $a\leq 0$, $d\geq 0$, $d<p^n$. 
We have $a=p^mb+r$ (with $0\leq r<p^m$).
Note that $0\leq  -b\leq p^{h-m}$. 
We have
\begin{align*}
\Psi_h(x)&=\sum_{i,j=0}^{p^h-1} e_{i,j}\otimes (S_p^*)^{h}U^{-i} U^aS_p^m(S_p^*)^nU^d U^jS_p^h\\
&=\sum_{i,j=0}^{p^h-1} e_{i,j}\otimes (S_p^*)^{h-m} ((S_p^*)^{m}U^{-i} U^rS_p^m)U^b(S_p^*)^nU^d U^jS_p^h\\
&=\sum_{j=0}^{p^h-1}\sum_{i_1=0}^{p^m-1}\sum_{i_2=0}^{p^{h-m}-1} e_{i_1+i_2p^m,j}\otimes (S_p^*)^{h-m} ((S_p^*)^{m}U^{-(i_1+i_2p^m)} U^rS_p^m)U^b(S_p^*)^nU^d U^jS_p^h\\
&=\sum_{j=0}^{p^h-1}\sum_{i_1=0}^{p^m-1}\sum_{i_2=0}^{p^{h-m}-1} e_{i_1+i_2p^m,j}\otimes (S_p^*)^{h-m} U^{-i_2} ((S_p^*)^{m}U^{-i_1} U^rS_p^m)U^b(S_p^*)^nU^d U^jS_p^h\\
&=\sum_{j=0}^{p^h-1}\sum_{i_2=0}^{p^{h-m}-1}e_{r+i_2p^m,j}\otimes (S_p^*)^{h-m}   U^{-i_2+b} (S_p^*)^nU^d U^jS_p^h \\
&=\sum_{j=0}^{p^h-1} \sum_{i_2=0}^{p^{h-m}-1}e_{r+i_2p^m,j}\otimes (S_p^*)^{h-m} U^{-i_2+b} ((S_p^*)^nU^d U^jS_p^n) S_p^{h-n}\\
&=\sum_{j_1=0}^{p^n-1}\sum_{j_2=0}^{p^{h-n}-1} \sum_{i_2=0}^{p^{h-m}-1}e_{r+i_2p^m,j_1+p^{n}j_2}\otimes (S_p^*)^{h-m} U^{-i_2+b} ((S_p^*)^nU^d U^{j_1+p^{n}j_2}S_p^n) S_p^{h-n}\\
&=\sum_{j_1=0}^{p^n-1}\sum_{j_2=0}^{p^{h-n}-1} \sum_{i_2=0}^{p^{h-m}-1}e_{r+i_2p^m,j_1+p^{n}j_2}\otimes (S_p^*)^{h-m} U^{-i_2+b} ((S_p^*)^nU^d U^{j_1}S_p^n)U^{j_2} S_p^{h-n}\\
\end{align*}
\begin{align*}
&=\sum_{j_2=0}^{p^{h-n}-1} \sum_{i_2=0}^{p^{h-m}-1}e_{r+i_2p^m,p^n-d+p^{n}j_2}\otimes (S_p^*)^{h-m} U^{-i_2+b+1} U^{j_2} S_p^{h-n}\\
&=\sum_{j_2=0}^{p^{h-n}-1} \sum_{i_2=0}^{p^{h-m}-1}e_{r+i_2p^m,p^n-d+p^{n}j_2}\otimes (S_p^*)^{h-m} U^{-i_2+b+j_2+1} S_p^{h-m}S_p^{m-n}\\
&=\sum_{j_3=0}^{p^{h-m}-1}\sum_{j_4=0}^{p^{m-n}-1} \sum_{i_2=0}^{p^{h-m}-1}e_{r+i_2p^m,p^n-d+p^{n}(j_3+j_4p^{h-m})}\otimes (S_p^*)^{h-m} U^{-i_2+b+1} U^{j_3+j_4p^{h-m}} S_p^{h-m}S_p^{m-n}\\
&= \sum_{j_3=0}^{p^{h-m}-1}\sum_{j_4=0}^{p^{m-n}-1} \sum_{i_2=0}^{p^{h-m}-1}e_{r+i_2p^m,p^n-d+p^{n}(j_3+j_4p^{h-m})}\otimes (S_p^*)^{h-m} U^{-i_2+b+1} U^{j_3} S_p^{h-m}U^{j_4}S_p^{m-n}
\end{align*}
Now since $-2p^{h-m}+2\leq-i_2+b+1+j_3  \leq p^{h-m}$, there are two nontrivial multiples of
$p^{h-m}$ among the values taken by  $-i_2+b+j_3$, namely $\pm p^{h-m}$ itself. Therefore, the last expression we had can be rewritten as
\begin{align*}
&\sum_{j_4=0}^{p^{m-n}-1} \sum_{i_2=0}^{p^{h-m}-1}e_{r+i_2p^m,p^n-d+p^{n}(i_2-b-1+j_4p^{h-m})}\otimes U^{j_4}S_p^{m-n} \\
&\qquad + \sum_{j_4=0}^{p^{m-n}-1} \sum_{i_2=0}^{p^{h-m}-1}e_{r+i_2p^m,p^n-d+p^{n}(i_2-b-1-p^{h-m}+j_4p^{h-m})}\otimes U^{j_4-1}S_p^{m-n} \\
&\qquad + \sum_{j_4=0}^{p^{m-n}-1} \sum_{i_2=0}^{p^{h-m}-1}e_{r+i_2p^m,p^n-d+p^{n}(i_2-b-1+p^{h-m}+j_4p^{h-m})}\otimes U^{j_4+1}S_p^{m-n} 
\end{align*}

We now move on to treat the second case, namely when $m=n$. There is no loss of generality if we also suppose  $|a|, |d|< p^h$ and $|d|<p^m$. As in the first case, there are again four subcases to consider depending on the signs
of $a$ and $d$. As they are very similar to one another, we treat in full one of these only:  the case when
 $a\geq 0$ and $d\leq 0$. 
We observe that $a=p^mb+r$ (with $0\leq r<p^m$).
Note also that $0\leq b<p^{h-m}$. 
Then we have
\begin{align*}
\Psi_h(x)&=\sum_{i,j=0}^{p^h-1} e_{i,j}\otimes (S_p^*)^{h}U^{-i} U^aS_p^m(S_p^*)^mU^d U^jS_p^h\\
&=\sum_{i,j=0}^{p^h-1} e_{i,j}\otimes (S_p^*)^{h}U^{-i} U^rS_p^mU^b(S_p^*)^mU^d U^jS_p^h\\
&=\sum_{i,j=0}^{p^h-1} e_{i,j}\otimes (S_p^*)^{h-m} ((S_p^*)^{m}U^{-i} U^rS_p^m)U^b(S_p^*)^mU^d U^jS_p^h\\
\end{align*}
\begin{align*}
&=\sum_{j=0}^{p^h-1}\sum_{i_1=0}^{p^m-1}\sum_{i_2=0}^{p^{h-m}-1} e_{i_1+i_2p^m,j}\otimes (S_p^*)^{h-m} ((S_p^*)^{m}U^{-(i_1+i_2p^m)} U^rS_p^m)U^b(S_p^*)^mU^d U^jS_p^h\\
&=\sum_{j=0}^{p^h-1}\sum_{i_1=0}^{p^m-1}\sum_{i_2=0}^{p^{h-m}-1} e_{i_1+i_2p^m,j}\otimes (S_p^*)^{h-m} U^{-i_2} ((S_p^*)^{m}U^{-i_1} U^rS_p^m)U^b(S_p^*)^mU^d U^jS_p^h\\
&=\sum_{j=0}^{p^h-1}\sum_{i_2=0}^{p^{h-m}-1}e_{r+i_2p^m,j}\otimes (S_p^*)^{h-m}   U^{-i_2+b} (S_p^*)^mU^d U^jS_p^h \\
&=\sum_{j=0}^{p^h-1} \sum_{i_2=0}^{p^{h-m}-1}e_{r+i_2p^m,j}\otimes (S_p^*)^{h-m} U^{-i_2+b} ((S_p^*)^mU^{d} U^jS_p^m) S_p^{h-m}\\
&=\sum_{j_1=0}^{p^m-1}\sum_{j_2=0}^{p^{h-m}-1} \sum_{i_2=0}^{p^{h-m}-1}e_{r+i_2p^m,j_1+p^{m}j_2}\otimes (S_p^*)^{h-m} U^{-i_2+b} ((S_p^*)^mU^d U^{j_1+p^{m}j_2}S_p^m) S_p^{h-m}\\
&=\sum_{j_2=0}^{p^{h-m}-1} \sum_{i_2=0}^{p^{h-m}-1}e_{r+i_2p^m,-d+p^{m}j_2}\otimes (S_p^*)^{h-m} U^{-i_2+b+j_2} S_p^{h-m}\\
\end{align*}
Since $-p^{h-m}+1\leq-i_2+b+j_2\leq 2p^{h-m}-2$, $p^{h-m}$ is the only nontrivial multiple
among the possible values of  $-i_2+b+j_2$, which means the above expression rewrites as
\begin{align*}
&\sum_{j_2=0}^{p^{h-m}-1}e_{r+(j_2+b)p^m,-d+p^{m}j_2}\otimes 1+ \sum_{j_2=0}^{p^{h-m}-1}e_{r+(j_2+b+p^{h-m})p^m,-d+p^{m}j_2}\otimes U
\end{align*}

Finally, the case $m<n$ requires no work, for it is easily reconducted  to the first   thanks to the the fact that $\Psi_h$ is a $*$-homomorphism.

The  inequalities involving the norms follow from the formulas arrived at above. Indeed, set $A:=R_{0}\otimes S_p^{m-n}+\tilde R_{0}\otimes S_p^{m-n}U+\hat R_{0}\otimes U^*S_p^{m-n}$. When $m>n$, we have
\begin{align*}
\|\Psi_h(x)\|^2&=\| \left( \sum_{j_4=1}^{p^{m-n}-1} R_{j_4}\otimes U^{j_4}S_p^{m-n}+A\right)^* \left(\sum_{j_4=1}^{p^{m-n}-1} R_{j_4}\otimes U^{j_4}S_p^{m-n}+A\right)\|\\
&=\| \sum_{j_4=1}^{p^{m-n}-1} R_{j_4}^*R_{j_4}\otimes 1+A^*A \| 
\end{align*}
It follows that 
\begin{align*}
\|x\|^2=\|\Psi_h(x)\|^2&\geq \| R_{j_4}^*R_{j_4} \|=\|R_{j_4}\|^2 \qquad \forall j\in\{1, \ldots , p^{m-n}-1\}\\
\|x\|^2&\geq \| A^*A\| =\|A\|^2
\end{align*}
%
From Lemma \ref{normineq} we get $\|x\|\geq \max\{\| R_0\|, \| \tilde R_0\|, \| \hat R_0\|\}$.
The cases $m<n$ and $m=n$ are quite analogous.
\end{proof}

Going back to the computations of the entropy of our winding endomorphisms, the first thing
we do is to provide a lower bound for it.  As in \cite{SZ}, this can be done by looking at the restriction of $\chi_k$ to a suitable MASA of $\CQ_p$. In
our case, the convenient  MASA  to  consider is obviously  $C^*(U)$ (cf. \cite{ACR, ACRS}).
\begin{lemma}\label{lowerbound}
For any integer $k$ coprime with $p$, one has
$$
{\rm ht}(\chi_{k}) \geq \log|k|
$$
\end{lemma}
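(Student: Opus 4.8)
The plan is to exploit Voiculescu's monotonicity of the entropy under restriction to an invariant commutative subalgebra, recalled in Section~2. Concretely, I would take $\CC:=C^*(U)\subset\CQ_p$, the abelian $C^*$-subalgebra generated by the unitary $U$. Since $\chi_k(U)=U^k\in C^*(U)$, the subalgebra $C^*(U)$ is invariant under $\chi_k$, and Voiculescu's inequality gives
$$
{\rm ht}(\chi_k)\geq {\rm ht}(\chi_k\upharpoonright_{C^*(U)})={\rm h}_{\rm top}(\Phi_k)\,,
$$
where $\Phi_k$ is the continuous map induced by $\chi_k\upharpoonright_{C^*(U)}$ on the Gelfand spectrum of $C^*(U)$.

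First I would pin down $C^*(U)$ and its spectrum. As $U$ is a unitary, $C^*(U)\cong C(\sigma(U))$; and in the canonical representation $\pi(U)$ is the bilateral shift on $\ell^2(\IZ)$, whose spectrum is the whole unit circle. Since $\pi$ is faithful ($\CQ_p$ being simple), we get $\sigma(U)=\IT$, so that $C^*(U)\cong C(\IT)$ with $U$ corresponding to the coordinate function $z$. Under this identification $\chi_k\upharpoonright_{C^*(U)}$ acts by $z\mapsto z^k$, whence the induced map on $\IT$ is precisely $\Phi_k(z)=z^k$.

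It then remains to compute the classical topological entropy of $\Phi_k$. For $|k|\geq 2$ the map $\Phi_k$ is an expanding self-map of the circle of degree $|k|$, and a standard argument (counting $(n,\epsilon)$-separated sets, or invoking that the topological entropy of an expanding map equals the logarithm of its degree) yields ${\rm h}_{\rm top}(\Phi_k)=\log|k|$. For $k=\pm 1$ the map $\Phi_k$ is a homeomorphism of $\IT$, hence has vanishing entropy, again matching $\log|k|=0$. Combining this with the displayed inequality gives ${\rm ht}(\chi_k)\geq\log|k|$.

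Once the general machinery is in place the argument is mostly bookkeeping; the only genuine analytic input is the classical value ${\rm h}_{\rm top}(\Phi_k)=\log|k|$, which I would either cite or establish through the expanding-map computation. The one subtlety worth checking is that $U$ has full spectrum $\IT$ (so that $C^*(U)\cong C(\IT)$ and not some proper quotient), and this is exactly where faithfulness of the canonical representation is used.
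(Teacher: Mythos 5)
Your proposal is correct and follows essentially the same route as the paper: restriction to the $\chi_k$-invariant commutative subalgebra $C^*(U)$, Voiculescu's monotonicity, and the classical fact that $z\mapsto z^k$ on $\IT$ has topological entropy $\log|k|$ (which the paper cites from the literature rather than proving). The extra details you supply---that $\sigma(U)=\IT$ via faithfulness of the canonical representation, so $C^*(U)\cong C(\IT)$---are details the paper leaves implicit, but the argument is the same.
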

\begin{proof}
The claim follows by monotonicity 
$$
{\rm ht}(\chi_{k})\geq {\rm ht}(\chi_{k})\upharpoonright_{C^*(U)}= h^{top}(T_k)= \log|k|\; .
$$
where $T_k(z):=z^k$,
see e.g. \cite{Block}.
\end{proof}
We are now in a position to prove the main result of this paper.

\begin{proof}[Proof of Theorem \ref{entropy}.]
Thanks to Lemma \ref{lowerbound}, all we have to do is   show that  the entropy of $\chi_k$ is less than or equal to
$\log|k|$.
For any $l\in\IN$, we set $\omega_l:=\cup_{q,r,s=0}^l\CA_{q,r,s}$. For $n\in \IN$ we denote by $\omega^{(n)}_l$ the  union $\cup_{j=0}^n\chi_{k}^j(\omega_l)$.
Fix $\delta>0$. Since $\CQ_p$ is nuclear, there exists $(\phi_0,\psi_0, M_{C_l}(\IC))\in {\rm CPA}(\CQ_p,\omega_l,\frac{\delta}{16\cdot p^l})$.
 We want to find an $m$ such that the exponents of $U$
appearing in the elements in $\Psi_m(\omega_l^{(n)})\subset M_{p^m}(\CQ_p)$ are smaller than $p^m$. 
This is certainly the case  provided that $|k|^nl<p^m$, as follows from a straightforward 
application of Remark \ref{exponents}. The inequality can also be rewritten as $n\log_p|k|+\log_p(l)<m$,
which is more suited to our purposes.
For instance, we can simply choose $m=[n\log_p|k|+\log_p(l)]+1$, where $[\cdot]$ denotes the integer part of a real number.\\
Again, by nuclearity of $\CQ_p$  there exists a $d\in\IN$ and u.c.p. map $\gamma: \Psi_m(\CQ_p)=M_{p^m}(\CQ_p)\to M_d(\IC)$ and $\eta:   M_d(\IC)\to \CQ_p$ such that for all $a\in\omega_l^{(n)}$ the inequality
$
\| \eta\circ \gamma(\Psi_m(a))-a\|<\delta/2
$ holds.\\
Set $\psi:=({\rm id}\otimes\psi_0)\circ\Psi_m$ and $\phi:=\eta\circ\gamma\circ({\rm id}\otimes \phi_0)$.
Now for any $x\in \omega_l$ and $h\in\IN$ with $h\leq n$, we have
\begin{align}\label{inequality-proof}
\begin{array}{l}
\| \phi\circ\psi(\chi_{k}^h(x))-\chi_{k}^h(x)\|=\\
=\| \eta\circ\gamma\circ({\rm id}\otimes \phi_0\circ\psi_0)\circ\Psi_m(\chi_{k}^h(x))-(\chi_{k}^h(x)\|\\
=\| \eta\circ\gamma\circ({\rm id}\otimes \phi_0\circ\psi_0)\circ\Psi_m(\chi_{k}^h(x))-\eta\circ\gamma\circ\Psi_m(\chi_{k}^h(x))\| \\
\qquad + \| \eta\circ\gamma\circ\Psi_m(\chi_{k}^h(x))-(\chi_{k}^h(x)\|\\
\leq \| \eta\circ\gamma\circ({\rm id}\otimes \phi_0\circ\psi_0)\circ\Psi_m(\chi_{k}^h(x))-\eta\circ\gamma\circ\Psi_m(\chi_{k}^h(x))\| +\frac{\delta}{2}\\
\leq \|({\rm id}\otimes (\phi_0\circ\psi_0))\circ\Psi_m(\chi_{k}^h(x))-\Psi_m(\chi_{k}^h(x)\| +\frac{\delta}{2}
\end{array}
\end{align}
By Remark \ref{exponents} $\chi_{k}^h(x)$ is in $\CB_{q,r,s}$ where $q\leq |k|^{h}l$ (which is smaller than $p^m$ by construction), and $r, s\leq l$.
As of now we will also assume $r>s$ since the case $r\leq s$ can be handled by means of similar computations.
By Inequality  \eqref{inequality-proof} and  Lemma \ref{lemma21}  we get
\begin{align*}
&\| \phi\circ\psi(\chi_{k}^h(x))-\chi_{k}^h(x)\|\\
&< \|\sum_{j_4=1}^{p^{r-s}-1} R_{j_4}\otimes (\phi_0\circ\psi_0)(U^{j_4}S_p^{r-s})+\\
&\qquad +R_{0}\otimes (\phi_0\circ\psi_0)(S_p^{r-s})+\tilde R_{0}\otimes (\phi_0\circ\psi_0)(S_p^{r-s}U)+\hat R_{0}\otimes (\phi_0\circ\psi_0)(U^*S_p^{r-s})\\
&\qquad -\sum_{j_4=1}^{p^{r-s}-1} R_{j_4}\otimes U^{j_4}S_p^{r-s}-R_{0}\otimes S_p^{r-s}-\tilde R_{0}\otimes S_p^{r-s}U-\hat R_{0}\otimes U^*S_p^{r-s}\| +\frac{\delta}{2}\\
&= \|\sum_{j_4=1}^{p^{r-s}-1} (R_{j_4}\otimes (\phi_0\circ\psi_0)(U^{j_4}S_p^{r-s})-U^{j_4}S_p^{r-s})+R_{0}\otimes ((\phi_0\circ\psi_0)(S_p^{r-s})-S_p^{r-s})\\
&\qquad +\tilde R_{0}\otimes ((\phi_0\circ\psi_0)(S_p^{r-s}U)-S_p^{r-s}U)+\\
&\qquad +\hat R_{0}\otimes ((\phi_0\circ\psi_0)(U^*S_p^{r-s})-U^*S_p^{r-s})\| +\frac{\delta}{2}\\
&\leq\sum_{j_4=1}^{p^{r-s}-1} \|(R_{j_4}\otimes (\phi_0\circ\psi_0)(U^{j_4}S_p^{r-s})-U^{j_4}S_p^{r-s})\|\\
&\qquad +\|R_{0}\otimes ((\phi_0\circ\psi_0)(S_p^{r-s})-S_p^{r-s})+\tilde R_{0}\otimes ((\phi_0\circ\psi_0)(S_p^{r-s}U)-S_p^{r-s}U)+\\
&\qquad +\hat R_{0}\otimes ((\phi_0\circ\psi_0)(S_p^{r-s}U)-S_p^{r-s}U)\| +\frac{\delta}{2}\\
&\leq \frac{\delta}{16\cdot p^l} \left(\sum_{j_4=1}^{p^{r-s}-1}  1\right) +\frac{\delta}{16\cdot p^l}3+\frac{\delta}{2}\\
&\leq \frac{\delta}{16\cdot p^l} (p^{r-s}-1)  +\frac{3\delta}{16\cdot p^l}+\frac{\delta}{2}\\
&\leq \frac{\delta}{16}   +\frac{3\delta}{32}+\frac{\delta}{2} =\frac{19\delta}{32}
<\delta
\end{align*}
where we used that $\|R_{j_i}\|\leq 1$ for all $i$, $\| R_0\|\leq 1$, $\| \tilde R_0\|\leq 1$, $\| \hat R_0\|\leq 1$.
The inequality proved above  shows that the triple $(\phi,\psi, M_{p^m}(\IC)\otimes M_{C_l}(\IC))$ is in CPA$(\CQ_p,\omega_l^{(n)}, \delta)$. Therefore,   rcp$(\omega_l^{(n)},\delta)$ must be less than or equal to $C_lp^m$. Accordingly, the natural logarithm of the  former quantity can be bounded in the following way:
\begin{align*}
\log {\rm rcp}(\omega_l^{(n)},\delta) & \leq \log(C_l)+m\log(p)=\log(C_l)+\log(p)(\left[n\log_p|k|+\log_p(l)\right]+1)\\
&= \log(C_l)+\log(p)\left(\frac{\log |k|}{\log(p)} n+\log_p(l)\right)+2\log(p)\\
&\leq \log(C_l)+n\log|k|+\log(l)+2\log(p),
\end{align*}
from which we find
$
\limsup_{n\to\infty}\left( \frac{1}{n} \log {\rm rcp}(\omega_l^{(n)},\delta)\right) \leq  \log |k|
$.\\
The thesis is thus arrived at thanks to the Kolmogorov-Sinai property of non-commutative entropy.
\end{proof}
It is worth noting that all winding endomorphisms leave the Bunce-Deddens algebra
$\CQ_p^\IT$ invariant, which means one can also compute the entropy of
the restriction of the winding endomorphisms to this subalgebra. It turns out that the index
of the restriction does not decrease. Indeed, we have the following
\begin{corollary}\label{entropyBD}
For any integer $k$ coprime with $p$, one has
$$
{\rm ht}(\chi_{k}\upharpoonright_{\CQ_p^\IT})= \log|k|
$$
\end{corollary}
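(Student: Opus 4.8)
The plan is to obtain the equality by squeezing $\mathrm{ht}(\chi_k\upharpoonright_{\CQ_p^\IT})$ between two copies of $\log|k|$, using only the monotonicity of Voiculescu's entropy together with the main theorem already proved. The starting observation is that $\chi_k$ commutes with the gauge action: on generators one checks $\chi_k(\alpha_z(U)) = U^k = \alpha_z(\chi_k(U))$ and $\chi_k(\alpha_z(S_p)) = zS_p = \alpha_z(\chi_k(S_p))$, so $\chi_k\circ\alpha_z = \alpha_z\circ\chi_k$ for every $z\in\IT$. Consequently $\CQ_p^\IT$ is $\chi_k$-invariant, and moreover the MASA $C^*(U)$, which is gauge-invariant since $\alpha_z(U)=U$, sits inside $\CQ_p^\IT$ and is itself $\chi_k$-invariant.

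For the lower bound I would simply repeat the argument of Lemma \ref{lowerbound}, now performed inside $\CQ_p^\IT$: since $C^*(U)\subset\CQ_p^\IT$ is a commutative $\chi_k$-invariant subalgebra on whose spectrum $\IT$ the map induced by $\chi_k$ is $T_k(z)=z^k$, the general lower-bound principle for the entropy of a restriction to an invariant commutative subalgebra gives $\mathrm{ht}(\chi_k\upharpoonright_{\CQ_p^\IT}) \geq \mathrm{ht}(\chi_k\upharpoonright_{C^*(U)}) = h^{top}(T_k) = \log|k|$.

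For the upper bound the key tool is the faithful conditional expectation $E\colon\CQ_p\to\CQ_p^\IT$ obtained by averaging the gauge action, $E(x)=\int_\IT\alpha_z(x)\,dz$, which satisfies $E\circ\chi_k=\chi_k\circ E$ because $\chi_k$ commutes with each $\alpha_z$. Given any finite $\omega\subset\CQ_p^\IT$ and $\epsilon>0$, and any triple $(\phi,\psi,\CB)\in\mathrm{CPA}(\CQ_p,\omega,\epsilon)$, the triple $(\phi\circ\iota,\, E\circ\psi,\, \CB)$ --- with $\iota\colon\CQ_p^\IT\hookrightarrow\CQ_p$ the inclusion --- lies in $\mathrm{CPA}(\CQ_p^\IT,\omega,\epsilon)$ and has the same rank, because $E$ is unital completely positive, fixes $\omega$ pointwise, and contracts norms. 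Hence the completely positive $\epsilon$-rank computed inside $\CQ_p^\IT$ is dominated by the one computed inside $\CQ_p$; since $\CQ_p^\IT$ is $\chi_k$-invariant the same holds for all orbit sets $\omega\cup\chi_k(\omega)\cup\cdots$, yielding $\mathrm{ht}(\chi_k\upharpoonright_{\CQ_p^\IT}) \leq \mathrm{ht}(\chi_k) = \log|k|$ by Theorem \ref{entropy}. Combining the two bounds gives the claim.

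There is essentially no serious obstacle here: the whole analytic content was already spent in proving Theorem \ref{entropy}, and the corollary is a soft consequence of monotonicity. The only points requiring care are the routine verification that $\chi_k$ commutes with the gauge action --- so that both $\CQ_p^\IT$ and $C^*(U)$ are invariant and $E$ intertwines $\chi_k$ --- and the observation that $\CQ_p^\IT$, being isomorphic to a Bunce-Deddens algebra, is nuclear, so that Voiculescu's entropy and its monotonicity property are indeed available on it.
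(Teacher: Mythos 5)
Your proof is correct and follows essentially the same route as the paper: both arguments squeeze ${\rm ht}(\chi_{k}\upharpoonright_{\CQ_p^\IT})$ between ${\rm ht}(\chi_{k}\upharpoonright_{C^*(U)})=\log|k|$ and ${\rm ht}(\chi_{k})=\log|k|$, using monotonicity of the entropy together with Theorem \ref{entropy} and the lower-bound principle already used in Lemma \ref{lowerbound}. The only difference is that where the paper simply cites monotonicity for the inclusion $\CQ_p^\IT\subset\CQ_p$, you actually justify it by composing approximation triples with the gauge-averaging conditional expectation $E(x)=\int_\IT \alpha_z(x)\,dz$ --- a worthwhile addition, since in Voiculescu's nuclear framework monotonicity for a noncommutative invariant subalgebra is not a formal triviality: it requires either such a conditional expectation onto the subalgebra (your argument) or an appeal to Brown's extension of the entropy to exact $C^*$-algebras, where approximations factor through $\CB(\CH)$ and monotonicity becomes automatic.
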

\begin{proof}
The claim follows directly from
 the monotonicity of the entropy:
$$
\log|k|={\rm ht}(\chi_{k}\upharpoonright_{C^*(U)})\leq{\rm ht}(\chi_{k}\upharpoonright_{\CQ_p^\IT})\leq{\rm ht}(\chi_{k})= \log|k|.
$$
\end{proof}

\section{On the Watatani index of the winding endomorphisms}\label{secindex}


Motivated by the work done in \cite{CS} on quadratic permutation endomorphisms of the Cuntz algebra $\CO_2$, in this section we undertake a study of the relation between the entropy and index of the restriction to the Bunce-Deddens subalgebras of our winding endomorphisms.\\
We are going to show that, as well as the entropy, the Watatani index (see \cite{W} for the definition and the main properties) of the restriction of $\chi_k$ to the Bunce-Deddens
algebra $\CQ_p^\IT$ can also be computed exactly. More precisely, for any integer $k$ coprime with 
$p$, the value of the index turns out to be $|k|$. Rather interestingly, the entropy of $\chi_k\upharpoonright_{\CQ_p^\IT}$ is then given by the
natural logarithm of the index of $\chi_k\upharpoonright_{\CQ_p^\IT}$.\\

For the reader's convenience we recall some basic definitions that we will make use of. 
We start with an inclusion of unital $C^*$-algebras $\CA\subset\CB$ with a common unit $I$ such that there exists a faithful conditional expectation
$E:\CB\rightarrow\CA$.
\begin{definition}
A finite family $\{u_1, u_2, \ldots, u_n\}\subset \CB$ is said to be a quasi-basis for $E$ if for any
$x\in\CB$ one has
$$x=\sum_{i=1}^n u_iE(u_i^*x)=\sum_{i=1}^n E(xu_i)u_i^*$$
\end{definition} 
Now our conditional expectation $E:\CB\rightarrow\CA$ has finite index if there exists a quasi-basis for
$E$. The index is then defined as ${\rm Ind}(E):=\sum_{i=1}^n u_iu_i^*\in\CB$.
Despite its definition, the index does not depend on the chosen quasi-basis. Moreover, 
the element ${\rm Ind}(E)$ sits in the center of $B$. In particular, if $B$ has trivial center, then 
${\rm Ind}(E)$ is a positive real number greater than or equal to $1$.
The Watatani index of the inclusion $\CA\subset\CB$ is then  defined as the infimum of the set of the indices obtained as above
corresponding to any conditional expectation of finite index. It turns out that this infimum is actually a minimum
provided that both $\CB$ and $\CA$ have trivial center, see \cite{W}.\\

To accomplish our computation of the index, we will make use of the following key result, where the image of the winding
endomorphism (restricted to $\CQ_p^\IT$) is seen to coincide with the fixed-point subalgebra of a finite-order
automorphism, which we next define.
Fix $z:=e^{2\pi i/k}$ and let $\alpha: \CQ_p^\IT\to\CQ_p^\IT$ be the automorphism mapping $U$ to $zU$ and fixing $S_p^h(S_p^*)^h$ for all $h\in\IN$, see e.g. \cite{Cuntzaxb}. Note that $\alpha^{|k|}={\rm id}_{\CQ_p^\IT}$, which allows us to define a conditional expectation $E$ from $\CQ_p^\IT$ to $(\CQ_p^\IT)^\alpha:=\{x\in\CQ_p^\IT: \alpha(x)=x\}$ as
\begin{equation}\label{ce}
E(x):=\frac{1}{|k|}\sum_{l=0}^{|k|-1}\alpha^n(x),\quad x\in\CQ_p^\IT.
\end{equation}

\begin{proposition}\label{index}
The image $\chi_k(\CQ_p^\IT)$ coincides with  $(\CQ_p^\IT)^\alpha$.
\end{proposition}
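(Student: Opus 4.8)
The plan is to establish the two inclusions separately, the inclusion $\chi_k(\CQ_p^\IT)\subseteq(\CQ_p^\IT)^\alpha$ being the routine one. Recall that $\CQ_p^\IT$ is the closed linear span of the gauge-invariant monomials $U^iS_p^h(S_p^*)^hU^j$, and consequently it is generated as a $C^*$-algebra by $U$ together with the projections $S_p^h(S_p^*)^h$, $h\in\IN$ (each generating projection $U^iS_p^h(S_p^*)^hU^{-i}$ of $\CD_p$ already lies in $C^*(U,\{S_p^h(S_p^*)^h\})$). On these generators I would simply check that $\alpha\circ\chi_k=\chi_k$: indeed $\alpha(\chi_k(U))=\alpha(U^k)=z^kU^k=U^k=\chi_k(U)$ since $z^k=1$, while $\alpha$ fixes each $S_p^h(S_p^*)^h=\chi_k(S_p^h(S_p^*)^h)$ by definition. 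Hence $\alpha$ fixes $\chi_k(\CQ_p^\IT)$ pointwise, which gives $\chi_k(\CQ_p^\IT)\subseteq(\CQ_p^\IT)^\alpha$.

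For the reverse inclusion I would first record the precise shape of the fixed-point algebra. Since $\alpha$ fixes $S_p^h(S_p^*)^h$ and sends $U^i$ to $z^iU^i$, one computes $\alpha(U^iS_p^h(S_p^*)^hU^j)=z^{i+j}\,U^iS_p^h(S_p^*)^hU^j$, so every such monomial is an eigenvector of $\alpha$. Applying the conditional expectation $E$ of \eqref{ce} therefore annihilates the monomial unless $i+j\equiv0\pmod{k}$ and leaves it fixed otherwise. By continuity of $E$ and density of the span of the monomials, this identifies $(\CQ_p^\IT)^\alpha$ with the closed linear span of those $U^aS_p^h(S_p^*)^hU^b$ for which $a+b\equiv0\pmod{k}$. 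It thus suffices to place each of these monomials inside $\chi_k(\CQ_p^\IT)$.

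The crux, and the step I expect to be the main obstacle, is to show that $\chi_k$ restricts to a surjection of the diagonal, $\chi_k(\CD_p)=\CD_p$. Here I would exploit $(k,p)=1$ directly: for each $m$ pick $k_m$ with $kk_m\equiv1\pmod{p^m}$, so that $kk_mi=i+p^ms$ for some integer $s$, and then, using the commutation relation $U^{p^m}S_p^m(S_p^*)^m=S_p^m(S_p^*)^mU^{p^m}$ already recorded in the excerpt, compute
\begin{equation*}
\chi_k\bigl(U^{k_mi}S_p^m(S_p^*)^mU^{-k_mi}\bigr)=U^{kk_mi}S_p^m(S_p^*)^mU^{-kk_mi}=U^{i}S_p^m(S_p^*)^mU^{-i}.
\end{equation*}
Thus every generating projection of $\CD_p$ lies in the image, and since $\chi_k(\CD_p)$ is a $C^*$-algebra this forces $\chi_k(\CD_p)=\CD_p$. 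Conceptually this is just the statement that, on the Cantor spectrum of $\CD_p$, $\chi_k$ is induced by multiplication by the $p$-adic unit $k$, hence by a homeomorphism; but the finite-level computation above avoids any spectral language.

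Finally I would assemble the pieces. Given a monomial with $a+b=kc$, I factor it as
\begin{equation*}
U^aS_p^h(S_p^*)^hU^b=\bigl(U^aS_p^h(S_p^*)^hU^{-a}\bigr)\,U^{kc},
\end{equation*}
where the first factor is a generating projection of $\CD_p=\chi_k(\CD_p)\subseteq\chi_k(\CQ_p^\IT)$, and the second equals $\chi_k(U^c)\in\chi_k(\CQ_p^\IT)$ because $U^c\in\CQ_p^\IT$. Their product lies in the $C^*$-algebra $\chi_k(\CQ_p^\IT)$, which, being the range of an injective ($\CQ_p$ is simple) hence isometric $*$-homomorphism, is closed; it therefore contains the entire closed span $(\CQ_p^\IT)^\alpha$. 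Together with the first inclusion this yields the desired equality $\chi_k(\CQ_p^\IT)=(\CQ_p^\IT)^\alpha$.
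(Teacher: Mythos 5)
Your proof is correct and takes essentially the same route as the paper: both arguments use the conditional expectation $E$ of \eqref{ce} to reduce to monomials $U^aS_p^h(S_p^*)^hU^b$ with $a+b\equiv 0 \pmod{k}$, factor off the diagonal projection $U^aS_p^h(S_p^*)^hU^{-a}$, and then combine coprimality of $k$ and $p^h$ (B\'ezout) with the commutation $U^{p^h}S_p^h(S_p^*)^h=S_p^h(S_p^*)^hU^{p^h}$ to exhibit that projection as $\chi_k$ of a diagonal projection. Your step of inverting $k$ modulo $p^m$ is exactly the content of the paper's Lemma \ref{lemma-Bar}.
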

First of all, we need a preliminary lemma.
\begin{lemma}\label{lemma-Bar}
Let $p, k\in\IZ$ be coprime integers. For any $i, h\in \IN$,  there exist $b, m\in \IZ$ such that $i+bp^h=mk$.
\end{lemma}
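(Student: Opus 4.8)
The plan is to read the equation $i + bp^h = mk$ as the assertion that the residue class of $-i$ lies in the subgroup of $\IZ/k\IZ$ generated by $p^h$, and then to exploit the stronger fact that $p^h$ is actually a unit in $\IZ/k\IZ$. Concretely, the statement is equivalent to the solvability in $b$ of the linear congruence $bp^h \equiv -i \pmod{k}$, after which $m$ is forced to be the integer $(i+bp^h)/k$. So the whole problem reduces to producing a single solution of a linear congruence whose coefficient $p^h$ is invertible modulo $k$.

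The key steps I would carry out are as follows. First I would observe that coprimality passes from $p$ to its powers: since $(p,k)=1$, any prime dividing both $p^h$ and $k$ would divide $p$ and $k$, whence $\gcd(p^h,k)=1$. Second, I would invoke B\'ezout's identity to obtain integers $u,v\in\IZ$ with $up^h+vk=1$. Third, multiplying this identity through by $i$ gives $iup^h + ivk = i$, which rearranges to $i + (-iu)p^h = (iv)k$; hence the choice $b:=-iu$ and $m:=iv$ settles the claim.

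There is really no substantial obstacle here: the entire content is the elementary fact that $p^h$ is invertible modulo $k$, which is immediate from $(p,k)=1$. The only points worth a line of care are the lift of coprimality from $p$ to $p^h$ and the bookkeeping of signs in the rearrangement; the degenerate case $|k|=1$ (which is in any event forced to be excluded once one demands $p\geq 2$ and a proper endomorphism) is handled trivially by taking $b=0$ and $m=\pm i$. I therefore expect the written proof to occupy only a couple of lines.
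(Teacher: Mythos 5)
Your proof is correct and follows essentially the same route as the paper's: pass coprimality from $p$ to $p^h$, apply B\'ezout's identity to get $up^h+vk=1$, multiply through by $i$, and rearrange to read off $b$ and $m$. The congruence-theoretic framing at the start is just packaging; the underlying argument is identical.
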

\begin{proof}
Since $k$ and $p$ are coprime, so are $k$ and $p^h$.
This means that $1=ck+dp^h$ for some $c, d\in\IZ$.
It follows that $i=ick+idp^h$ and so we may choose $b$ as $-id$ and $m=ic$.
\end{proof}
We now go back to the proof of Proposition \ref{index}.
\begin{proof}[Proof of Proposition \ref{index}]
Since $\chi_k(\CQ_p^\IT)$ is generated by the monomials of the form $U^{ki}S_p^h (S_p^*)^hU^{kj}$, with $i, j\in\IZ$, $h\in\IN$, the inclusion $\chi_k(\CQ_p^\IT)\subset (\CQ_p^\IT)^\alpha$ is clear.\\
For the converse inclusion, we  need to use the  conditional expectation $E: \CQ_p^\IT \to (\CQ_p^\IT)^\alpha$ 
defined in \eqref{ce}.
As already mentioned, the algebra $\CQ_p^\IT$ is linearly generated by the elements of the form $U^{i}S_p^h (S_p^*)^hU^{j}$, $h\in\IN$ and $i, j\in\IZ$. So all we have to do is show that the images of these elements under $E$ are in $\chi_k(\CQ_p^\IT)$.
A straightforward computation shows that $E(U^{i}S_p^h (S_p^*)^hU^{j})$ is zero, unless $i+j$ is $0$ mod $k$, that is $j=-i +mk$ for some $m\in\IZ$. 
If $E(U^{i}S_p^h (S_p^*)^hU^{j})$ is not zero, then it is equal to $U^{i}S_p^h (S_p^*)^hU^{-i} U^{km}$.
Now $U^{i}S_p^h (S_p^*)^hU^{-i} U^{km}$ lies in $\chi_k(\CQ_p^\IT)$ if and only if $U^{i}S_p^h (S_p^*)^hU^{-i}$ does.
Thanks to the chain of equalities
$$
U^{i}S_p^h (S_p^*)^hU^{-i}=U^{i}S_p^hU^bU^{-b} (S_p^*)^hU^{-i}=U^{i+bp^h}S_p^h (S_p^*)^hU^{-i-bp^h}
$$
which hold for any $b\in\IN$, our claim finally follows from Lemma \ref{lemma-Bar}.
\end{proof}

\begin{remark}
The inclusion  $(\CQ_p^{\IT})^\alpha\subset \CQ_p^{\IT}$ is an example of a non-commutative self-covering of the type considered in \cite{AGI}. 
\end{remark}

We are now in a position to compute the index of $\chi_k$ relative to the conditional expectation $E$ considered above.

\begin{theorem}\label{indexE}
The index of $\chi_k(\CQ_p^\IT)$ in $\CQ_p^\IT$ relative to the conditional expectation $E$ above is $|k|$.
\end{theorem}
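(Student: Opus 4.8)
The plan is to exhibit an explicit quasi-basis for $E$ and simply read off the index from it. By Proposition \ref{index} the conditional expectation $E$ maps $\CQ_p^\IT$ onto $\chi_k(\CQ_p^\IT)=(\CQ_p^\IT)^\alpha$, so it is precisely the conditional expectation attached to the inclusion $\chi_k(\CQ_p^\IT)\subset\CQ_p^\IT$ whose index we want. Since $\alpha$ has order $|k|$, the averaging \eqref{ce} over the cyclic group it generates is a faithful conditional expectation: if $E(x^*x)=0$ then $\sum_{l}\alpha^l(x)^*\alpha^l(x)=0$, forcing $x=0$.

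First I would introduce the spectral decomposition of $\CQ_p^\IT$ with respect to $\alpha$. Recalling that $z=e^{2\pi i/k}$ is a primitive $|k|$-th root of unity, set $X_j:=\{x\in\CQ_p^\IT:\alpha(x)=z^{j}x\}$ for $j=0,1,\ldots,|k|-1$. One then has $\CQ_p^\IT=\bigoplus_{j=0}^{|k|-1}X_j$, with $X_0=(\CQ_p^\IT)^\alpha$, and $E$ acts as the projection onto $X_0$ along this decomposition: a homogeneous component $x_j\in X_j$ satisfies $E(x_j)=\tfrac{1}{|k|}\bigl(\sum_{l=0}^{|k|-1}z^{jl}\bigr)x_j$, which is $x_j$ when $j=0$ and $0$ otherwise by the vanishing of the geometric sum of nontrivial roots of unity. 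The crucial observation is that every spectral subspace contains a \emph{unitary}: since $\alpha(U)=zU$ we get $\alpha(U^j)=z^{j}U^{j}$, so $U^j\in X_j$ is unitary for each $j$.

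The main step is to verify that $\{U^l:l=0,1,\ldots,|k|-1\}$ is a quasi-basis for $E$. Given $x\in\CQ_p^\IT$, write $x=\sum_{j=0}^{|k|-1}x_j$ with $x_j\in X_j$. Because $U^{-l}x_j\in X_{j-l}$, only the term $j=l$ survives the projection, so $E\bigl((U^l)^*x\bigr)=E(U^{-l}x)=U^{-l}x_l$. Hence
\begin{equation*}
\sum_{l=0}^{|k|-1}U^lE\bigl((U^l)^*x\bigr)=\sum_{l=0}^{|k|-1}U^lU^{-l}x_l=\sum_{l=0}^{|k|-1}x_l=x,
\end{equation*}
and the entirely symmetric computation, using $E(xU^l)=x_{|k|-l}\,U^l$ (indices read modulo $|k|$, with $x_{|k|}:=x_0$), yields $x=\sum_{l}E(xU^l)(U^l)^*$. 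This establishes both quasi-basis identities, and the index follows at once from its defining formula:
\begin{equation*}
{\rm Ind}(E)=\sum_{l=0}^{|k|-1}U^l(U^l)^*=\sum_{l=0}^{|k|-1}1=|k|\,.
\end{equation*}

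I expect the only genuinely substantive point to be the \emph{existence of a unitary in every spectral subspace}: this is exactly what guarantees a finite quasi-basis and forces the clean value $|k|$, and it is supplied here for free by the powers of $U$, whereas for a general finite-order automorphism the fixed-point inclusion need not even have finite index. Everything else—the orthogonality of the spectral components under $E$ and the two telescoping sums above—is routine.
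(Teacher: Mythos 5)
Your proof is correct and in substance follows the same route as the paper's: both rest on Proposition~\ref{index}, which identifies $\chi_k(\CQ_p^\IT)$ with the fixed-point algebra $(\CQ_p^\IT)^\alpha$ of the $\IZ_{|k|}$-action generated by $\alpha$, and then read off ${\rm Ind}(E)=|k|$ from the group order. The paper disposes of this last step in one line by citing the fixed-point description, whereas you verify it explicitly by exhibiting the quasi-basis $\{U^l: l=0,\ldots,|k|-1\}$; your observation that the powers of $U$ provide a unitary in each spectral subspace $X_j$ is exactly the hypothesis that legitimizes the paper's one-line appeal, since for a general finite-order automorphism (e.g.\ one acting trivially) the fixed-point inclusion need not have index equal to the order of the automorphism.
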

\begin{proof}
It is a direct consequence of  the description of $\chi_k(\CQ_p^\IT)$ as the fixed-point algebra under the action of the finite group $\IZ_{|k|}$,
which has order $|k|$.
\end{proof}

Our next goal is to show that the conditional expectation $E$ we have used so far is actually unique. 
This will be a consequence of Corollary 1.4.3 in \cite{W} once we have 
ascertained the set equality
$\chi_k(\CQ_p^\IT)'\cap\CQ_p^\IT=\IC$.
\begin{proposition}
For any $k$ coprime with $p$, the relative commutant $\chi_k(\CQ_p^\IT)'\cap\CQ_p^\IT$ is trivial.
\end{proposition}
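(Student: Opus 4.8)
The plan is to exploit the identification, recalled in the preliminaries, of $\CQ_p^\IT$ with the crossed product of the diagonal by the odometer. Concretely, $\CD_p=C(K)$ is a maximal abelian subalgebra with Gelfand spectrum the Cantor set $K\cong\varprojlim_h\IZ/p^h\IZ$, the unitary $U$ implements the odometer, i.e. ${\rm Ad}(U)\!\upharpoonright_{\CD_p}=T$ is translation by $1$, and $\CQ_p^\IT$ is the closed linear span of $\CD_p U^n$, $n\in\IZ$. Since $\CD_p$ is maximal abelian in $\CQ_p$ and is contained in $\CQ_p^\IT$, it is maximal abelian in $\CQ_p^\IT$ too (if $x\in\CQ_p^\IT$ commutes with $\CD_p$, then it does so in $\CQ_p$, hence $x\in\CD_p$). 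With this in hand I would reduce the computation of the relative commutant to a purely dynamical statement about $T$.

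First I would confine any element of the relative commutant to $\CD_p$. The key observation is that $\chi_k$ sends the level-$h$ generating projection $U^iS_p^h(S_p^*)^hU^{-i}$ to $U^{ki}S_p^h(S_p^*)^hU^{-ki}$; since $(k,p)=1$, multiplication by $k$ permutes $\IZ/p^h\IZ$, so $\chi_k$ permutes these projections at each level and therefore $\chi_k(\CD_p)=\CD_p$. In particular $\CD_p\subset\chi_k(\CQ_p^\IT)$, so any $x$ commuting with $\chi_k(\CQ_p^\IT)$ commutes with all of $\CD_p$, whence $x\in\CD_p$ by maximal abelianness.

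It then remains to show that such an $x\in\CD_p$ is scalar. Because $U^k=\chi_k(U)\in\chi_k(\CQ_p^\IT)$, the element $x$ must commute with $U^k$, i.e. be fixed by ${\rm Ad}(U^k)\!\upharpoonright_{\CD_p}=T^k$; viewing $x$ as a function on $K$, this says $x$ is $T^k$-invariant. Now $T^k$ is translation by $k$ on $K$, and since $(k,p)=1$ the residue of $k$ generates $\IZ/p^h\IZ$ for every $h$. Hence the orbit $\{s+kn:n\in\IZ\}$ of any $s\in K$ projects onto all of $\IZ/p^h\IZ$ at every level, so it is dense in $K$; thus $T^k$ is minimal. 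A minimal homeomorphism admits only constant invariant continuous functions, so $x\in\IC$. As $\IC$ obviously lies in the relative commutant, equality $\chi_k(\CQ_p^\IT)'\cap\CQ_p^\IT=\IC$ follows.

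The main obstacle is the dynamical input in the last step, namely recognizing that the iterate $T^k$ of the odometer is still minimal; this is precisely where coprimality re-enters (the only other place it is used being the permutation argument yielding $\chi_k(\CD_p)=\CD_p$). Everything else is formal once the crossed-product picture and the maximal abelianness of $\CD_p$ are available.
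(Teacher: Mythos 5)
Your proof is correct, and while its first reduction coincides with the paper's, the decisive step goes by a genuinely different route. Both you and the paper begin by placing the relative commutant inside $\CD_p\cap C^*(U^k)'$: you obtain $\CD_p\subset\chi_k(\CQ_p^\IT)$ by noting that $\chi_k$ permutes the level-$h$ projections $U^jS_p^h(S_p^*)^hU^{-j}$ because multiplication by $k$ is a bijection of $\IZ/p^h\IZ$, whereas the paper gets the same containment from its earlier identification $\chi_k(\CQ_p^\IT)=(\CQ_p^\IT)^\alpha$, which rests on a B\'ezout-type lemma --- the same coprimality input in different clothing. After that the paper stays inside the canonical representation on $\ell^2(\IZ)$: an $x\in\CD_p$ commuting with $U^k$ is a diagonal operator with $k$-periodic entries, hence has finite spectrum; its spectral projections lie in $\CD_p$, each dominates some $U^mS_p^n(S_p^*)^nU^{-m}$, and solving $a+lk\equiv b\pmod{p^n}$ (possible since $(k,p^n)=1$) exhibits a basis vector lying in the ranges of two orthogonal spectral projections, a contradiction. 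You instead pass to the Gelfand spectrum and invoke minimality of $T^k$, translation by $k$ on $K=\varprojlim_h\IZ/p^h\IZ$: every orbit surjects onto each level $\IZ/p^h\IZ$ because $(k,p)=1$, hence is dense, so continuous $T^k$-invariant functions are constant. Your argument is shorter and more conceptual, and it reverses the paper's logical order: the Remark that the paper places right after the proposition extracts precisely your dynamical statement (constancy of continuous $T^k$-invariant functions) as a corollary of its operator-theoretic proof, while you prove that statement directly and deduce the proposition from it. What the paper's hands-on argument buys is that it never needs the identification of $(\CD_p,{\rm Ad}(U)\upharpoonright_{\CD_p})$ with $(C(K),T)$; but since that identification is stated in the preliminaries, your reliance on it is perfectly legitimate.
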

\begin{proof}
Since $\CD_p$ is contained in $\chi_k({\CQ_p^\IT})$, we have
 $\chi_k(\CQ_p^\IT)'\cap\CQ_p^\IT\subset \CD_p\cap C^*(U^k)'$. 
We are thus led to prove that any $x\in\CD_p$ such that $xU^k=U^kx$ is actually a scalar.
We will work in the canonical representation of $\CQ_p$, which acts on the Hilbert
space $\ell^2(\IZ)$.
Now any $x\in \CD_p$ is a diagonal operator w.r.t. the canonical basis
$\{e_i: i\in\IZ\}$ of $\ell^2(\IZ)$, that is $xe_i=x_ie_i$, $i\in\IZ$, for suitable
$x_i\in\IC$.
Since $U^ke_i= e_{i+k}$, for any $i\in\IZ$, it is easy to see that any such $x$ commutes with $U$ if and only if
$x_{i}=x_{i+hk}$ for any $i, h\in\IZ$. In particular, the set $\{x_i: i\in\IZ\}$ is finite.
In other terms, the spectrum of $x$ is finite as well, which means its spectral projections
belong to $\CD_p$. Now write $x=\sum_{i=0}^{k-1}x_iP_i$, where $P_i$ is the orthogonal projection onto the subspace
$\overline{{\rm span}}\{ e_{i+hk}: h, k\in\IZ\}$. As there can exist different values of $i$ (in $\{0, 1, 2, ..., k-1\}$) giving
the same $x_i$, we rewrite the above sum as
$x:= \sum_{\lambda\in\sigma(x)} Q_\lambda$, where $Q_\lambda$ is the spectral projection associated with $\lambda$ and
$Q_\lambda:= \sum_{i: x_i=\lambda} P_i$.
But because $\sigma(x)$ is finite, each $Q_\lambda$ can be obtained via the continuous functional calculus of $x$, which
means $Q_\lambda$ belongs to $\CD_p$ for every $\lambda\in\sigma(x)$.
As we next show, this implies that $x$ must be a multiple of the identity. For, if it is not a scalar, then
$\sigma(x)$ contains at least two different values, say $\lambda$ and $\mu$. Now any projection in $\CD_p$ is a finite sum of projections of the type $U^m(S_p)^n(S_p^*)^nU^{-m}$, with
$m, n\in\IN$, {\it cf.} \cite[Lemma 6.21]{ACR}. In particular,
$Q_\lambda\geq U^m(S_p)^n(S_p^*)^nU^{-m}$ and
 $Q_\mu\geq U^{m'}(S_p)^n(S_p^*)^nU^{-m'} $ for some $n, m, m'\in\IN$ (there is no loss of generality is assuming that the power $n$ of the isometry $S_p$ is the same in the two inequalities). 
Now take $a:=p^nh_1+m$ and $b:=p^nh_2+ m'$.
Observe that by construction if $e_n$ lies in the range of $Q_\lambda$, so does $e_{n+k}$. We claim that
$a+lk\equiv b$ mod $p^n$ for some $l\in\IZ$. From this we find $Q_\lambda e_{a+lk}=Q_\mu e_{a+lk}=e_{a+lk}$, which is absurd since
$Q_\lambda Q_\mu=0$. The claim follows from the fact that $k$ and $p^n$ are coprime: multiplying
$ks +p^nt=1$ by $(b-a)$ we get $(b-a)sk+t(b-a)p^n=b-a$, that is
$a + (b-a)sk=b-t(b-a)p^n$ and the claim is thus verified with $l=(b-a)s$. The proof is complete.
\end{proof}
\begin{remark}
From the proof of the foregoing result we can single out the interesting information that all
powers $T^k$ of the $p$-adic odometer $T$ on the Cantor set $K$ with $k$ and $p$ coprime enjoy the following property:
any continuous $T^k$-invariant function is a constant.
\end{remark}
As an application of the previous result we also find the following.
\begin{theorem}\label{indexBunceDeddens}
For any $k$ coprime with $p$, the Watatani index of $\chi_k(\CQ_p^\IT)$ in $\CQ_p^\IT$ is $|k|$.
\end{theorem}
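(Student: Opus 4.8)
The plan is to assemble the two results just established into the definition of the Watatani index. Recall that the Watatani index of the inclusion $\chi_k(\CQ_p^\IT)\subset\CQ_p^\IT$ is, by definition, the infimum of ${\rm Ind}(E')$ taken over \emph{all} conditional expectations $E'$ of finite index, so the whole question reduces to controlling how many such expectations exist. Since Theorem \ref{indexE} already exhibits one finite-index conditional expectation, namely the averaging map $E$, with ${\rm Ind}(E)=|k|$, it suffices to show that no other finite-index conditional expectation can do better; the cleanest way to achieve this is to prove that $E$ is in fact the \emph{only} conditional expectation of finite index.

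First I would record that both $\CQ_p^\IT$ and its image $\chi_k(\CQ_p^\IT)\cong\CQ_p^\IT$ are simple, being Bunce--Deddens algebras of type $p^\infty$, and hence have trivial center. As noted in the discussion following the definition of the index, this guarantees that ${\rm Ind}(E')$ is a positive real scalar $\geq 1$ for every finite-index $E'$ and that the infimum defining the Watatani index is actually attained. Next I would invoke the triviality of the relative commutant $\chi_k(\CQ_p^\IT)'\cap\CQ_p^\IT=\IC$ established in the foregoing Proposition, which is precisely the hypothesis needed to apply Corollary 1.4.3 of \cite{W}: when the relative commutant of the inclusion is trivial, the conditional expectation of finite index, if it exists, is unique.

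With uniqueness in hand, the infimum over all finite-index conditional expectations collapses to the single value ${\rm Ind}(E)$, which is $|k|$ by Theorem \ref{indexE}; hence the Watatani index equals $|k|$. I do not expect a genuine obstacle here, since all the substantive work—the identification $\chi_k(\CQ_p^\IT)=(\CQ_p^\IT)^\alpha$, the index computation for $E$, and the triviality of the relative commutant—has already been carried out in the preceding results. The only point requiring care is verifying that the hypotheses of Watatani's Corollary 1.4.3 are met in full (trivial relative commutant together with the trivial-center/simplicity conditions on both algebras), so that the uniqueness conclusion, and therefore the collapse of the infimum to ${\rm Ind}(E)$, is legitimate.
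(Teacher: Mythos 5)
Your proposal is correct and follows essentially the same route as the paper: the triviality of the relative commutant $\chi_k(\CQ_p^\IT)'\cap\CQ_p^\IT=\IC$ together with Corollary 1.4.3 of \cite{W} gives uniqueness of the conditional expectation, so the value $|k|$ computed in Theorem \ref{indexE} is the Watatani index. The extra remarks you add (simplicity of the Bunce--Deddens algebra, attainment of the infimum) are consistent with the paper's preliminaries and do not change the argument.
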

\begin{proof}
Since the conditional expectation $E:\CQ_p^\IT\rightarrow \chi_k(\CQ_p^\IT)$ is unique, the index we computed in
Theorem \ref{indexE} is actually the Watatani index of the inclusion $\chi_k(\CQ_p^\IT)\subset\CQ_p^\IT$.
\end{proof}

We would be inclined to believe that the index of $\chi_k$ is still $|k|$ at the level of the whole
$p$-adic ring $C^*$-algebras. Despite our efforts, though, we have not been able to ascertain this equality in full generality.
Even so, we do know the index is $|k|$ when $k=\pm(p-1)^i$, $i\geq 1$ (note that $p$ and $p-1$ are always coprime).

We start our analysis with $k=p-1$. In this case by universality
it is not difficult to see that for any $z\in\IT$ such that $z^{p-1}=1$,
$\beta(S_p):=S_p$ and $\beta(U):=zU$ defines an automorphism of $\CQ_p$.
Again, $\beta$ has finite order in that $\beta^{p-1}={\rm id}_{\CQ_p}$.
In the following we take $z$ as a primitive root of unity of order $p-1$ so that the corresponding $\beta$ gives an automorphic action
of $\IZ_{p-1}$.
Let now $F$ be the conditional expectation from $\CQ_p$ to $\CQ_p^\beta$
given by $F:=\frac{1}{p-1}\sum_{i=0}^{p-2}\beta^i$. We have the following result.
\begin{theorem}
Let $p\geq 2$ be a fixed integer. If $k=p-1$, then 
$\chi_k(\CQ_p)=\CQ_p^{\beta}$. In particular, the index of
$\chi_{(p- 1)}(\CQ_p)$ relative to $F$ is $p- 1$.
\end{theorem}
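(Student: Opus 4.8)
The plan is to establish the two inclusions $\chi_{p-1}(\CQ_p)\subseteq\CQ_p^\beta$ and $\CQ_p^\beta\subseteq\chi_{p-1}(\CQ_p)$ separately, in complete analogy with the proof of Proposition \ref{index}, and then to read off the value of the index from the resulting description of $\chi_{p-1}(\CQ_p)$ as a fixed-point algebra. For the first inclusion I would note that $\chi_{p-1}(\CQ_p)$ is generated as a $C^*$-algebra by $\chi_{p-1}(U)=U^{p-1}$ and $\chi_{p-1}(S_p)=S_p$. Since $z^{p-1}=1$ we have $\beta(U^{p-1})=z^{p-1}U^{p-1}=U^{p-1}$ and $\beta(S_p)=S_p$, so $\beta$ fixes both generators and hence the whole subalgebra they generate; this yields $\chi_{p-1}(\CQ_p)\subseteq\CQ_p^\beta$.

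For the converse inclusion I would first use $F$ to pin down $\CQ_p^\beta$. As $F=\frac1{p-1}\sum_{i=0}^{p-2}\beta^i$ is the averaging conditional expectation for the $\IZ_{p-1}$-action generated by $\beta$, the fixed-point algebra $\CQ_p^\beta$ is exactly the range of $F$, hence the closed linear span of the elements $F(M)$ as $M$ ranges over the spanning monomials $M=U^aS_p^m(S_p^*)^nU^d$. Because $\beta(M)=z^{a+d}M$ and $z$ is a primitive $(p-1)$-th root of unity, one has $F(M)=M$ when $a+d\equiv 0\pmod{p-1}$ and $F(M)=0$ otherwise. Thus it suffices to prove that every monomial $U^aS_p^m(S_p^*)^nU^d$ with $a+d\equiv 0\pmod{p-1}$ lies in $\chi_{p-1}(\CQ_p)$.

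This last point is the heart of the argument. Using the relations $S_p^mU=U^{p^m}S_p^m$ and $U(S_p^*)^n=(S_p^*)^nU^{p^n}$ I would derive the exponent-shifting identity $U^aS_p^m(S_p^*)^nU^d=U^{a-p^m}S_p^m(S_p^*)^nU^{d+p^n}$ and iterate it $t$ times to reach $U^{a-tp^m}S_p^m(S_p^*)^nU^{d+tp^n}$. The decisive observation, and the reason the case $k=p-1$ is tractable, is that $p\equiv 1\pmod{p-1}$, so $p^m\equiv p^n\equiv 1\pmod{p-1}$; choosing $t\geq 0$ with $t\equiv a\pmod{p-1}$ then forces both $a-tp^m$ and $d+tp^n$ to be divisible by $p-1$, the latter because $a+d\equiv 0$. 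Writing $a-tp^m=(p-1)\mu$ and $d+tp^n=(p-1)\nu$, the monomial equals $\chi_{p-1}(U^\mu S_p^m(S_p^*)^nU^\nu)$ and therefore belongs to $\chi_{p-1}(\CQ_p)$. Taking closed linear spans gives $\CQ_p^\beta\subseteq\chi_{p-1}(\CQ_p)$, and combined with the first inclusion we obtain $\chi_{p-1}(\CQ_p)=\CQ_p^\beta$. Notice that this argument handles general $m,n$ uniformly, so no separate treatment of the balanced case $m=n$ is needed. The main obstacle is precisely this exponent bookkeeping: one must verify the shifting identity and check that the residues modulo $p-1$ conspire to clear both exponents at once.

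Finally, the index statement follows exactly as in Theorem \ref{indexE}: since $\chi_{p-1}(\CQ_p)=\CQ_p^\beta$ is the fixed-point algebra of the automorphic $\IZ_{p-1}$-action generated by $\beta$, and $F$ is the associated averaging conditional expectation, the index of the inclusion $\chi_{p-1}(\CQ_p)\subseteq\CQ_p$ relative to $F$ equals the order of the group, namely $p-1$.
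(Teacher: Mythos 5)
Your proof is correct and follows the same route as the paper: identify $\chi_{p-1}(\CQ_p)$ with the fixed-point algebra $\CQ_p^{\beta}$ and then, exactly as in Theorem \ref{indexE}, read off the index of the averaging expectation $F$ as the order of the group $\IZ_{p-1}$. The only difference is that the paper dismisses the equality $\chi_{p-1}(\CQ_p)=\CQ_p^{\beta}$ as ``trivially satisfied,'' whereas you supply the actual verification---reducing to monomials via $F$ and then clearing both exponents with the shifting identity, which works precisely because $p\equiv 1 \pmod{p-1}$---and that verification is sound.
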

\begin{proof}
The equality $\chi_k(\CQ_p)=\CQ_p^\beta$ is trivially satisfied. By the same argument as in the proof of Theorem \ref{indexE} it follows that the index of $\chi_k$ is equal to $p-1=k$.
\end{proof}
\begin{remark}
The case of a negative $k$, say $k=-(p-1)$ easily follows from the above result as $\chi_{-k}=\chi_{-1}\circ\chi_k$ and
the index of $\chi_{-1}$ is $1$ because $\chi_{-1}$ is an automorphism.
\end{remark}

In order to conclude that the value of the
 index determined above is actually the Watatani index, we need to prove that
$F$ is the only conditional expectation of $\CQ_p$ onto $\CQ_p^\beta$.

\begin{proposition}
The conditional expectation $F: \CQ_p\rightarrow \CQ_p^\beta$ is unique.
\end{proposition}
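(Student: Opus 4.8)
The plan is to reduce the uniqueness of $F$ to the triviality of the relative commutant $\chi_{p-1}(\CQ_p)'\cap\CQ_p$, and then to invoke Corollary 1.4.3 of \cite{W} in exactly the way it was used for the Bunce--Deddens algebra in the proof of Theorem \ref{indexBunceDeddens}. Since $F$ has already been exhibited as a conditional expectation of finite index onto $\CQ_p^\beta=\chi_{p-1}(\CQ_p)$, once the relative commutant is shown to be $\IC$ the cited result will force $F$ to be the only conditional expectation of $\CQ_p$ onto $\CQ_p^\beta$.

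First I would observe that the diagonal subalgebra $\CD_p$ is contained in $\chi_{p-1}(\CQ_p)$. Indeed, $\CD_p$ is gauge invariant, so $\CD_p\subset\CQ_p^\IT$, and since $p-1$ is coprime with $p$, the proof of Proposition \ref{index} (via Lemma \ref{lemma-Bar}) already yields $\CD_p\subset\chi_{p-1}(\CQ_p^\IT)\subset\chi_{p-1}(\CQ_p)$. Because $\CD_p$ is maximal abelian in $\CQ_p$, it then follows that
\[
\chi_{p-1}(\CQ_p)'\cap\CQ_p\subseteq\CD_p'\cap\CQ_p=\CD_p .
\]
Moreover every element of the relative commutant must commute with $U^{p-1}=\chi_{p-1}(U)$, which lies in $\chi_{p-1}(\CQ_p)$, whence
\[
\chi_{p-1}(\CQ_p)'\cap\CQ_p\subseteq\CD_p\cap C^*(U^{p-1})' .
\]

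At this point the computation to carry out is literally the one already performed for $\CQ_p^\IT$: working in the canonical representation on $\ell^2(\IZ)$, an element $x\in\CD_p$ is diagonal, say $xe_i=x_ie_i$, and commuting with $U^{p-1}$ forces $x_i=x_{i+h(p-1)}$ for all $i,h$; the resulting finiteness of the spectrum places the spectral projections of $x$ in $\CD_p$, and the coprimality of $p-1$ and $p^n$ rules out two distinct spectral values, so $x$ must be a scalar. This gives $\chi_{p-1}(\CQ_p)'\cap\CQ_p=\IC$, and Corollary 1.4.3 of \cite{W} then completes the argument.

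The only point I expect to need genuine care is verifying that the diagonal argument, originally run inside $\CQ_p^\IT$, transfers unchanged to the full algebra $\CQ_p$. This should be harmless, since $\CD_p$ is maximal abelian in $\CQ_p$ as well and the canonical representation is the same in both settings, so the two inputs actually used — that the relative commutant lands in $\CD_p$ and that commutation with $U^{p-1}$ pins down the diagonal entries — are identical. The remaining verifications ($\CD_p\subseteq\chi_{p-1}(\CQ_p)$ and the finiteness of the index of $F$) are immediate from the results established above.
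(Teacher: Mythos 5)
Your proposal is correct, but it takes a genuinely different route from the paper's. The paper's own proof is essentially a one-liner: since $\beta(S_p)=S_p$, the subalgebra $C^*(S_p)$ is contained in $\CQ_p^\beta$, hence $(\CQ_p^\beta)'\cap\CQ_p\subseteq C^*(S_p)'\cap\CQ_p$, and the latter is trivial by an external result (\cite[Theorem 4.6]{ACRS}); uniqueness then follows from Corollary 1.4.3 of \cite{W}, exactly as you invoke it. You instead work on the unitary side: $\CD_p\subseteq\chi_{p-1}(\CQ_p)$ (via Lemma \ref{lemma-Bar}, as in the proof of Proposition \ref{index}), maximal abelianness of $\CD_p$ traps the relative commutant inside $\CD_p$, and commutation with $U^{p-1}=\chi_{p-1}(U)$ together with the spectral-projection/coprimality argument forces scalars. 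All the steps check out; in particular you are right that the diagonal argument transfers verbatim from $\CQ_p^\IT$ to $\CQ_p$, since it only uses that $\CD_p$ is maximal abelian in $\CQ_p$ and the canonical representation on $\ell^2(\IZ)$. What each approach buys: the paper's is shorter and is the one it recycles later for $k=\pm(p-1)^i$ (every $\chi_k$ fixes $S_p$, so the same citation settles all those cases at once); yours is longer but self-contained relative to the paper, reusing only machinery already developed there (the MASA property of $\CD_p$, Lemma \ref{lemma-Bar}, and the odometer-style argument from the proposition preceding Theorem \ref{indexBunceDeddens}), and it shows directly that $\chi_k(\CQ_p)'\cap\CQ_p=\IC$ for every $k$ coprime with $p$, not only $k=p-1$.
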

\begin{proof}
Again, we need only prove that $(\CQ_p^\beta)'\cap \CQ_p= \IC$, which is a consequence of the equality
$C^*(S_p)'\cap\CQ_p=\IC$ (see \cite[Theorem, 4.6]{ACRS}) since $\beta(S_p)=S_p$. 
\end{proof}

As a result, the following is now straightforward.

\begin{theorem}
For any $p\geq 2$, the Watatani index  of $\chi_k(\CQ_p)$ in $\CQ_p$ is $|k|$ if
$k=p-1$.
\end{theorem}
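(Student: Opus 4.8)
The plan is to read the statement off directly from the two results immediately preceding it, since the substantive work has already been carried out. First I would invoke the theorem establishing that, for $k=p-1$, the image $\chi_k(\CQ_p)$ coincides with the fixed-point algebra $\CQ_p^\beta$ of the finite-order automorphism $\beta$ (with $\beta(S_p)=S_p$, $\beta(U)=zU$ for $z$ a primitive $(p-1)$-th root of unity), and that the index of this inclusion relative to the averaging conditional expectation $F=\frac{1}{p-1}\sum_{i=0}^{p-2}\beta^i$ equals $p-1=|k|$. This furnishes one admissible finite-index conditional expectation together with its index, exactly as the description of $\chi_k(\CQ_p)$ as the fixed-point algebra of the order-$(p-1)$ group $\IZ_{p-1}$ dictates.

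Second, I would invoke the preceding proposition, which asserts that $F$ is the \emph{unique} conditional expectation from $\CQ_p$ onto $\CQ_p^\beta=\chi_k(\CQ_p)$. Recall that this uniqueness is itself a consequence of the triviality of the relative commutant $(\CQ_p^\beta)'\cap\CQ_p=\IC$, which follows from $C^*(S_p)'\cap\CQ_p=\IC$ (since $\beta$ fixes $S_p$) together with Watatani's Corollary 1.4.3 in \cite{W}.

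With these two ingredients in hand the conclusion is immediate from the definition of the Watatani index. By definition the Watatani index of the inclusion $\chi_k(\CQ_p)\subset\CQ_p$ is the infimum of $\mathrm{Ind}(G)$ taken over all conditional expectations $G$ of finite index. Since $F$ is the only conditional expectation onto $\chi_k(\CQ_p)$ whatsoever, this infimum is taken over the singleton $\{\mathrm{Ind}(F)\}=\{p-1\}$, whence the Watatani index equals $\mathrm{Ind}(F)=p-1=|k|$. I would also note that $\CQ_p$ is simple, hence has trivial center, so $\mathrm{Ind}(F)$ is a genuine scalar, as required for the index to be a well-defined number; this mirrors the treatment of the Bunce--Deddens case in Theorem \ref{indexBunceDeddens}.

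There is essentially no obstacle at this final stage: the two hard steps — identifying the image as a fixed-point algebra and computing the index via the order of the acting group, and proving uniqueness of $F$ — have already been dispatched. The only point deserving a word of care is logical rather than technical, namely that the uniqueness furnished by the preceding proposition is uniqueness among \emph{all} conditional expectations, not merely those of finite index, so that the infimum defining the Watatani index is genuinely over a single element; the proposition delivers exactly this, and the result follows.
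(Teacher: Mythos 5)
Your proposal is correct and follows exactly the paper's route: the paper likewise derives this theorem immediately from the identification $\chi_{p-1}(\CQ_p)=\CQ_p^\beta$ with index $p-1$ relative to $F$, together with the uniqueness of the conditional expectation $F$, so that the infimum defining the Watatani index is attained at $\mathrm{Ind}(F)=p-1=|k|$. Your additional remark that uniqueness holds among all conditional expectations (not just finite-index ones) is a fair point of care and is consistent with the paper's use of Watatani's Corollary 1.4.3.
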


Now iterating the procedure above, it is clear that there exists a conditional
expectation $F_j$ form $\chi_{p-1}^j(\CQ_p)$ onto  $\chi_{p-1}^{j+1}(\CQ_p)$ and the index of $F_j$ is still
$p-1$. Compounding these conditional expectations, for every $i\geq 1$ one obtains a conditional expectation from
$\CQ_p$ onto $\chi_{(p-1)}^i=\chi_{(p-1)^i}$ whose index is obviously $(p-1)^i$.
Again, the relative commutant $\chi_{(p-1)^i}(\CQ_p)'\cap\CQ_p$ is trivial as $\chi_{(p-1)^i}(S_p)=S_p$,  and so the conditional expectation is unique. Therefore, collecting everything together, the following result is arrived at.

\begin{theorem}
For any $k=\pm(p-1)^i$, $i\in\IN$, the Watatani index of $\chi_k(\CQ_p)$ in $\CQ_p$ is equal to $k$.
\end{theorem}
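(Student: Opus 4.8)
The plan is to assemble the statement from the ingredients already in place, reducing everything to the base case $k=p-1$ treated above. First I would settle the sign and the trivial exponent. For $i=0$ we have $k=\pm1$, and $\chi_{\pm1}$ is an automorphism, so the index is $1=|k|$. For a negative $k=-(p-1)^i$ with $i\geq1$, I would use the composition rule $\chi_{-1}\circ\chi_{(p-1)^i}=\chi_{-(p-1)^i}$: since $\chi_{-1}$ is an automorphism of $\CQ_p$, it carries the inclusion $\chi_{(p-1)^i}(\CQ_p)\subset\CQ_p$ onto $\chi_{-(p-1)^i}(\CQ_p)\subset\CQ_p$, and the Watatani index is invariant under such an isomorphism of inclusions. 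Hence it is enough to treat $k=(p-1)^i$ with $i\geq1$.

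For these $k$, I would exploit $\chi_{(p-1)^i}=\chi_{p-1}^i$ to build the decreasing tower
$$
\CQ_p\supset\chi_{p-1}(\CQ_p)\supset\chi_{p-1}^2(\CQ_p)\supset\cdots\supset\chi_{p-1}^i(\CQ_p)=\chi_{(p-1)^i}(\CQ_p).
$$
Since $\CQ_p$ is simple, $\chi_{p-1}$ is injective, so each intermediate algebra is $*$-isomorphic to $\CQ_p$ and in particular has trivial center. Applying the base case on each floor produces a conditional expectation $F_j:\chi_{p-1}^j(\CQ_p)\to\chi_{p-1}^{j+1}(\CQ_p)$ of index $p-1$, and composing $F_{i-1}\circ\cdots\circ F_0$ gives a finite-index conditional expectation from $\CQ_p$ onto $\chi_{(p-1)^i}(\CQ_p)$ whose index, by the multiplicativity of the Watatani index along a tower, equals the product $(p-1)^i=|k|$.

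It then remains to promote this specific index to the Watatani index by showing the conditional expectation is unique. For this I would invoke $C^*(S_p)'\cap\CQ_p=\IC$ from \cite[Theorem 4.6]{ACRS}: because $\chi_{(p-1)^i}(S_p)=S_p$, the subalgebra $C^*(S_p)$ is contained in $\chi_{(p-1)^i}(\CQ_p)$, whence
$$
\chi_{(p-1)^i}(\CQ_p)'\cap\CQ_p\subset C^*(S_p)'\cap\CQ_p=\IC.
$$
Triviality of this relative commutant, via Watatani's uniqueness criterion \cite[Corollary 1.4.3]{W}, forces uniqueness of the conditional expectation, so the index $(p-1)^i$ computed above is exactly the Watatani index, completing the proof.

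I expect the only real point requiring care to be the multiplicativity of the index along the tower: one must verify that the composite of the $F_j$ is again a finite-index conditional expectation and that a quasi-basis for it is obtained by multiplying the quasi-bases on the successive floors, so that the indices multiply to $(p-1)^i$. Everything else---injectivity of $\chi_{p-1}$ from simplicity, the inclusion $C^*(S_p)\subset\chi_{(p-1)^i}(\CQ_p)$, and the passage from trivial relative commutant to uniqueness---is immediate once the base case and the cited relative-commutant result are granted.
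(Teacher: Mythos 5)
Your proposal is correct and follows essentially the same route as the paper: iterate the base case $k=p-1$ to get conditional expectations $F_j:\chi_{p-1}^j(\CQ_p)\to\chi_{p-1}^{j+1}(\CQ_p)$ of index $p-1$, compose them and use multiplicativity of the index to obtain $(p-1)^i$, reduce negative $k$ via the automorphism $\chi_{-1}$, and conclude via uniqueness of the conditional expectation from the triviality of $\chi_{(p-1)^i}(\CQ_p)'\cap\CQ_p$, which follows from $\chi_{(p-1)^i}(S_p)=S_p$ and $C^*(S_p)'\cap\CQ_p=\IC$. Your explicit flagging of the multiplicativity of the index along the tower (via multiplying quasi-bases) is a point the paper passes over with ``obviously,'' so your write-up is if anything slightly more careful on that step.
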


\section*{Acknowledgements}
V. A. acknowledges the support by the Swiss National Science foundation through the SNF project no. 178756 (Fibred links, L-space covers and algorithmic knot theory).
The authors wish to thank Simone Del Vecchio for several useful discussions on the topic of this paper.

\end{document}